
\documentclass{amsart}
\usepackage{amsmath,amsfonts,amssymb,amsthm}

\usepackage[T2A]{fontenc}

\usepackage{stmaryrd}
\usepackage{xcolor}
\usepackage{mathrsfs}

\numberwithin{equation}{section}


\newtheorem{Th}{Theorem}[section]
\newtheorem{Cor}[Th]{Corollary}
\newtheorem{Lem}[Th]{Lemma}
\newtheorem{Prop}[Th]{Proposition}
\newtheorem{Claim}[Th]{Claim}


\def\cB{\mathscr B}
\def\cC{\mathscr C}
\def\cD{\mathscr D}
\def\cL{\mathscr L}

\def\cF{\mathcal F}

\def\cX{\mathscr X}
\def\cY{\mathscr Y}
\def\cZ{\mathscr Z}

\def\inn#1{\mathrm{Inn}(#1)}
\def\aut#1{\mathrm{Aut}(#1)}
\def\out#1{\mathrm{Out}(#1)}
\def\ia#1{\mathrm{IA}(#1)}

\def\sym#1{\mathrm{Sym}(#1)}

\def\Mod#1{\,(\operatorname{mod} #1)}
\def\av#1{\overline{#1}}
\def\str#1{\langle #1 \rangle}

\def\id{\mathrm{id}}
\def\To{\Rightarrow}
\def\eps{\varepsilon}
\def\f{\varphi}
\def\s{\sigma}

\def\le{\leqslant}
\def\ge{\geqslant}

\def\sle{\subseteq}

\def\N{\mathbf N}
\def\Z{\mathbf Z}

\def\iat{{\rm IA}}

\def\rank{\mathrm{rank}}

\def\inv{{}^{-1}}

\def\cF{\mathcal F}
\def\cG{\mathcal G}
\def\cH{\mathcal H}
\def\cM{\mathcal M}

\def\At{{\rm A}}
\def\Za{\mathrm{Z}}
\def\Zo{\mathrm{Z}_{\text{\rm o}}}

\newenvironment{customthm}[1]
  {\innercustomthm}
  {\endinnercustomthm}

\begin{document}

\title[Complete outer automorphism groups]
 {Complete outer automorphism groups of free nilpotent groups}

\author{Vladimir A. Tolstykh}
\address{Vladimir A. Tolstykh\\
Department of Mathematics and Computer Science \\
Istanbul Arel University \\
34537 Tepekent - B{U}y{u}k{c}ekmece \\
Istanbul \\
Turkey}
\email{vladimirtolstykh@arel.edu.tr}

\keywords{free nilpotent groups, automorphism groups, outer automorphism groups}
\subjclass[2020]{20F28, 20F18}

\maketitle

\begin{abstract}
We prove that the outer automorphism group $\out N$
of an infinitely generated free nilpotent group $N$
of class two is complete.
\end{abstract}

\section{Introduction}

The general conjecture proposed by Baumslag (1970s)
stated that a ``sufficiently symmetric group was
likely to have a very short (automorphism) tower'', and specifically
it would be the case for relatively free
groups \cite{DFo_shrt}.
The conjecture has been confirmed
in the ``sharpest sense'' for nonabelian
(absolutely) free groups, since, as it has been shown in
\cite{DFo, To_Towers},
the automorphism group $\aut F$ of any nonabelian
free group $F$ is complete. Recall that a group
$G$ is said to be {\it complete} if $G$ is centerless
and all automorphism of $G$ are inner, and
so $G \cong \inn G =\aut G.$

Baumslag's conjecture has been also confirmed
for free nilpotent groups \cite{DFo, Kass, To_2step,
To_Aut(N)} and
for free solvable groups
\cite{DFo3, To_Smallness}.
In particular, if $N$ is a free nilpotent
group of class $c \ge 2,$ then its automorphism group $\aut N$
is complete provided that $3 \le \rank(N) < \infty$
and $c=2$ \cite{DFo2}, or if $N$ is infinitely
generated \cite{To_2step, To_Aut(N)}.

Furthermore,
given a free group $F$
of countable rank at least three,
the \emph{outer} automorphism group $\out F$
of $F$ is complete \cite{BrVo, Khr, To_Out(F)}.
Essentially,
it has been demonstrated
that some structures recoverable from the group
$\out F$ with the use of
elements of finite order \cite{Khr, To_Out(F)} are ``sufficiently symmetric'', in line
with Baumslag's conjecture.  Importantly, this also applies to the standard
object on which the group $\out F$ acts in the case when $F$ is finitely
generated, namely the Outer space \cite{BrVo}.

In fact, the main technical result of
\cite{To_Out(F)} states that if $F$
is infinitely generated, then the group
$\out F$ is complete provided that
$F$ has the small index property
for free algebras \cite{To_Smallness4Nilps}.
Let $\mathfrak V$ be a variety
of algebras and let $\cF$
be a free algebra of infinite
rank from $\mathfrak V.$ We
say that $\cF$ has the {\it small index
property}, if any subgroup
$\Sigma$ of the automorphism
group $\Gamma=\aut\cF$ of $\cF$ having index at most $\rank(\cF)$
(a `small' one) contains
the pointwise stabilizer
$\Gamma_{(U)}$ of a subset of $\cF$
of cardinality $< \rank(\cF).$
Speaking of an arbitrary infinitely generated
free group $F,$ the small index
property for $F$ has been justified
only in the case when the rank of $F$
is countable \cite{BrEv}, and the general case
appears to be very challenging.
Another possible approach to settle the problem
of completeness of the group $\out F$
is to settle first the problem of obtaining a `manageable' generating
set of the group $\aut F,$ but it is (also)
a long-standing open problem.

Nevertheless, it seems very likely that
in the majority of known cases where
the automorphism group of a relatively free group
is complete, so is its outer automorphism group.
With this goal in mind, we endeavor
in the present paper to prove the following
result.

\begin{customthm}{\ref{MainTh}}
The outer automorphism group of an infinitely
generated free nilpotent group of class two
is complete.
\end{customthm}

Here, infinitely generated free nilpotent groups
are ideal testing ground: they have
the small index property \cite{To_Smallness4Nilps}
and their automorphism groups have nice
generating sets \cite{To_Berg}, and in the class two case
the technical difficulties are minimal.
Let $N$ be an infinitely generated
free nilpotent group of class two. It is relatively straightforward to show
that the group $\out N$ is centerless (Corollary
\ref{Centerless}), and the main challenge is to show that
all automorphisms of the group $\out N$
are inner. To the best of the author's knowledge,
the problem of the completeness of the outer
automorphism groups of finitely generated free nilpotent
groups of class at least two has not been addressed.

Principally, the main steps in the proof
Theorem \ref{MainTh} are quite similar
to the main steps in the proof
given in \cite{To_Out(F)} to show that
the outer automorphism group of a free
group of countably infinite rank is complete.
However, a possibility to recover in
the group $\out N$ the automorphism
group $\aut{A}$  of the abelianization
$A=N/[N,N]$ of $N$ (Proposition \ref{hatIA_is_def}) makes things considerably simpler.

\section{$\iat$-automorphisms and A-symmetries }

Everywhere below $N$ stands for an infinitely generated free nilpotent group of nilpotency class $2.$
Observe that the commutator subgroup $N'=[N,N]$ is
equal the center $\Za(N)$ of the group $N,$ and it is a free abelian group.
Let $\cX$ be a basis
of the group $N$ and let $<$
be a linear order on $\cX$
(here and below, we always
use the term `basis' to refer to a free generating set
of $N,$ \cite[pp. 9--14]{HN}).
It is easy to see that the commutator subgroup $N'$ is freely generated by
a subset of the set of all basis commutators
$[x,y]=xyx\inv y\inv,$ where $x,y \in \cX,$ with respect to
the basis $\cX,$ namely by the subset
$$
\{[x,y] : x,y \in \cX, x < y\}.
$$

An {\it \iat-automorphism} $\alpha$ of the group $N$
preserves all elements of the group $N$ modulo the subgroup
$N',$ or, in other words, the action of $\alpha$ on the
basis $\cX$ is given by
$$
\alpha x = x c_x, \qquad (x \in \cX).
$$
where $c_x \in N'.$
Accordingly, due to the fact that all $c_x$ are central elements of $N,$
where $x$ runs over $\cX,$
we have that $\alpha$ fixes pointwise all basis commutators
with respect to the basis $\cX,$
$$
\alpha( [x,y]) = [x c_x, y c_y]=[x,y], \qquad (x,y \in \cX),
$$
and hence $\alpha$ fixes each element of the commutator
subgroup $N'$ of $N.$ It then follows that the group
$\ia N$ of all \iat-automorphisms of $N$ is
a torsion-free abelian group.

Given any $z \in N,$ we shall denote by $\tau_z$
the inner automorphism
$$
\tau_z(a) =zaz\inv, \qquad (a \in N)
$$
of the group $N$ determined by $z.$ We shall
use the standard notation $\inn N$
for the group of all inner automorphisms
of $N.$ Clearly,
\begin{equation} \label{tau_z=tau_w}
\tau_z = \tau_w \iff z \equiv w \Mod{N'}, \qquad (z, w \in N).
\end{equation}
We shall use the following statement a number of times below.

\begin{Claim} \label{iaSqEqTauz}
Let $\tau_z,$ where $z \in N \setminus N',$ be a non-trivial
inner automorphism of the group $N$ and let
$$
\alpha^2 = \tau_z,
$$
where $\alpha \in \ia N.$ Then $\alpha=\tau_u$ is an inner
automorphism of the group $N$ determined by a suitable
$u \in N.$
\end{Claim}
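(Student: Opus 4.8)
We have $\alpha \in \mathrm{IA}(N)$, so $\alpha$ acts on a basis $\mathcal{X}$ by $\alpha x = x c_x$ with $c_x \in N'$. We're told $\alpha^2 = \tau_z$ for some $z \in N \setminus N'$, and we want to show $\alpha = \tau_u$.

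Let me compute $\alpha^2$. Since $c_x$ is central:
$$\alpha^2 x = \alpha(xc_x) = \alpha(x)\alpha(c_x) = xc_x \cdot c_x = x c_x^2$$
(using that $\alpha$ fixes $N'$ pointwise, so $\alpha(c_x) = c_x$).

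So $\alpha^2$ acts by $x \mapsto x c_x^2$.

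**What does $\tau_z$ look like on the basis?**

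$\tau_z(x) = zxz^{-1}$. In a class-2 nilpotent group, $zxz^{-1} = x[z,x]$... let me check the convention. We have $[z,x] = zxz^{-1}x^{-1}$, so $zxz^{-1} = [z,x]x = x[z,x]$ (commutators are central). Wait, let me be careful: $[z,x]x = zxz^{-1}x^{-1} \cdot x = zxz^{-1}$. Yes. So $\tau_z(x) = [z,x] \cdot x = x \cdot [z,x]$ since $[z,x] \in N'$ is central.

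So $\tau_z$ acts by $x \mapsto x[z,x]$.

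**The equation $\alpha^2 = \tau_z$ on basis elements:**

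$x c_x^2 = x[z,x]$, i.e., $c_x^2 = [z,x]$ for all $x \in \mathcal{X}$.

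So we need: for each basis element $x$, $[z,x]$ is a square in $N'$.

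**The key point.**

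Since $N' $ is free abelian, and $\alpha$ exists with $c_x^2 = [z,x]$, this means $[z,x]$ is a square in $N'$ for every $x \in \mathcal{X}$, with $c_x$ being "half" of it.

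Now I want to find $u$ with $\alpha = \tau_u$, i.e., with $c_x = [u,x]$ for all $x$. Combined with $c_x^2 = [z,x]$, this would need $[u,x]^2 = [z,x] = [z,x]$... wait I need $[2u, x]$-type relation. Let me think additively.

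**Switch to additive notation in $N'$ and the abelianization.**

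Let $A = N/N'$, a free abelian group (infinite rank), with basis the images $\bar{x}$. The commutator map $N \times N \to N'$ is bilinear and alternating, factoring through $A$. Write $[z,x]$ additively.

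Let $\bar{z} = \sum_i m_i \bar{x_i}$ (finite sum) in $A$. Then $[z,x] = \sum_i m_i [x_i, x]$.

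We need $c_x$ with $2c_x = [z,x]$ in $N'$. Since $N'$ is free abelian (torsion-free), $c_x$ is the *unique* half of $[z,x]$ — provided $[z,x]$ is divisible by $2$.

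**Strategy for finding $u$.** I want $u$ with $[u,x] = c_x$ for all $x$. Equivalently (writing $\bar{u} = \sum_j a_j \bar{x_j}$), I need the "linear functional" $x \mapsto c_x$ to be realized as $x \mapsto [\bar u, x]$.

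Key structural fact: A map $\mathcal{X} \to N'$, $x \mapsto c_x$, is of the form $x \mapsto [u,x]$ for some $u \in N$ **iff** it extends to an alternating bilinear form matching the commutator structure — concretely, iff the element $\sum_x (\text{something})$ is consistent. The condition $2c_x = [z,x]$ forces $c_x = \frac{1}{2}[z,x] = [\frac{1}{2}z, x]$ **if $z$ is 2-divisible mod $N'$**, and then $u = z^{1/2}$ works.

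But $z$ need not be divisible by 2. The real content: we need each $[z,x]$ to be an *even* element of $N'$. Since $\bar z = \sum m_i \bar x_i$, we have $[z,x_k] = \sum_i m_i [x_i, x_k]$, and the coefficients (on the free basis $\{[x_i,x_j]: i<j\}$ of $N'$) are exactly $\pm m_i$. For all these to be even, we need **all $m_i$ even**, i.e., $\bar z \in 2A$, i.e. $z \equiv z_0^2 \pmod{N'}$ for some $z_0$.

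Then set $u = z_0$: $[u, x] = \frac{1}{2}[z,x] = c_x'$ where $c_x' = [u,x]$ satisfies $(c_x')^2 = [z,x]^{... }$— wait, I need $\alpha$ and $\tau_u$ to agree, and both $c_x$ and $[u,x]$ square to $[z,x]$, so by torsion-freeness $c_x = [u,x]$, giving $\alpha = \tau_u$.

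**The obstacle.** The crux is showing $\bar z \in 2A$ (i.e. $z$ is even mod $N'$). This should follow from the *existence* of $\alpha$: the $c_x$ exist in $N'$ with $2c_x = [z,x]$, forcing each $[z,x]$ even, hence each coefficient $m_i$ even. I'd verify this coordinate-by-coordinate using the free basis of $N'$, being careful that infinitely many $x$ are involved but $\bar z$ has finite support. This finiteness/divisibility bookkeeping is the main thing to get right.
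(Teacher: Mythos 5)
Your proposal is correct and follows essentially the same route as the paper: both reduce the equation $\alpha^2=\tau_z$ on a basis to $c_x^2=[z,x]=\prod_i[x_i,x]^{m_i}$, use that the basis commutators $[x_i,x]$ form part of a free basis of the free abelian group $N'$ to force every exponent $m_i$ to be even, and then take $u$ with $\av u=\tfrac12\av z$ so that $c_x=[u,x]$ by torsion-freeness of $N'.$ The only point you leave slightly sketched --- that one must test against a basis element $x$ outside the (finite) support of $z$ to see that \emph{all} $m_i$ are even --- is exactly the case distinction the paper carries out explicitly.
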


\begin{proof}
Let $\cX$ be a basis of the group $N.$ Due to
(\theequation), we can assume that
$$
z = x_1^{m_1} x_2^{m_2} \ldots x_s^{m_s},
$$
where $x_i \in \cX$ are pairwise distinct
and $m_i \in \Z.$ Suppose that
$$
\alpha x = x c_x,
$$
where $c_x \in N'$ for all $x \in \cX.$

Now let $x \in \cX$ be arbitrary. By the conditions,
$$
\alpha^2 x = x c_x^2 = c_x^2 x = z x z\inv =\tau_z(x),
$$
and hence
\begin{equation}
c_x^2 = zxz\inv x\inv=
[z,x] = [\prod_{i=1}^s x_i^{m_i},x] = \prod_{i=1}^s [x_i,x]^{m_i}.
\end{equation}
Assume first that $x \notin \{x_1,x_2,\ldots,x_s\}.$ Then
the set
$$
[x_1,x], [x_2,x], \ldots, [x_s,x]
$$
of $\cX$-basis commutators can be extended to
a basis of the free abelian group $N'.$
Accordingly, the element $c_x$ must be
a product of the basis commutators $[x_i,x],$
that is,
$$
c_x = \prod_{i=1}^s [x_i,x]^{l_i},
$$
where $l_i$ are suitable integer numbers.
By (\theequation), we obtain that $m_i=2l_i$
for all $i,$ and so
denoting by
$u$ the product
$\prod_{i=1}^s x_i^{l_i},$ we see that
$$
c_x=[u,x].
$$
In the case when $x=x_i$ for some $i,$ we obtain,
arguing as before, that
$$
c_x = \prod_{\substack{i=1\\ x_i \ne x}}^s [x_i,x]^{l_i},
$$
but then clearly
$$
c_x=\prod_{i=1}^s [x_i,x]^{l_i}=[u,x],
$$
and the result follows.
\end{proof}

Everywhere below,
the letter $A$ will always denote the abelianization $N/[N,N]$ of the group $N.$
As it is traditional, the symbol $\av{\phantom a}$
will be used both for the natural homomorphism
$N \to A$ and for the induced homomorphism
$\aut N \to \aut A.$

The natural homomorphism $\aut N \to \out N=\aut N/\inn N$ will be denoted by $\widehat{\phantom a}.$
Given any element $s \in \out N,$ any $\sigma \in \aut N$ such that
$$
\widehat \sigma =\sigma \inn N = s
$$
will be said to {\it induce} $s.$
It is convenient to extend the use of the symbol
$\av{\phantom a}$ to denote the natural homomorphism
$\out N \to \aut A$ as well. The neutral element
of the group $\out N$ will be denoted simply by $1.$

As in the papers \cite{To_Towers, To_2step, To_Aut(N)}, an automorphism $\theta$ of the group $N$ which inverts
all elements of a certain basis of the group $N$ is called a {\it symmetry};
for simplicity's sake, an element $t \in \out N$ which is induced
by a symmetry in $\aut N$ will also be referred
to as a symmetry. Clearly, any symmetry in the group $\aut N$
(resp. in the group $\out N$) induces
the automorphism $-\id_A$ of the group $A.$

More generally, an element
$\theta \in \aut N$
will be called an A-{\it symmetry} if $\theta$ induces the
automorphism $-\id_A$ of the
\underline{a}belianization
$A$ of $N.$ As before,
the term will be shared with the elements
in the group $\out N$ that are induced
by A-symmetries in the group $\aut N.$

\begin{Lem} \label{Symmies:Basics}
{\rm (i)} Let $\theta$ be an \At-symmetry in
the group $\aut N.$ Then
$$
\theta \alpha \theta\inv = \alpha\inv
$$
for every \iat-automorphism $\alpha$ the
group $N.$

{\rm (ii)} Every \At-symmetry in the group
$\out N$ is an involution.
\end{Lem}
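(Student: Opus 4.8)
The plan is to establish (i) by a direct computation on a basis and then to deduce (ii) in a couple of lines. The starting point for (i) is that $\ia N$ is the kernel of the homomorphism $\av{\phantom a}\colon \aut N \to \aut A$, and is therefore normal in $\aut N$; consequently $\theta\alpha\theta\inv$ is again an \iat-automorphism, and it suffices to compare it with $\alpha\inv$ on a basis $\cX$ of $N.$ The computation rests on two ingredients: (a) an \At-symmetry $\theta$ fixes $N'$ pointwise, and (b) an explicit evaluation of $\theta\alpha$ and $\alpha\inv\theta$ on $\cX.$

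For (a), since $\theta$ induces $-\id_A,$ for any basis $\cX$ we may write $\theta x = x\inv w_x$ with $w_x \in N' = \Za(N).$ Using bilinearity of the commutator in class two, $\theta[x,y] = [x\inv w_x,\, y\inv w_y] = [x\inv, y\inv] = [x,y]$ for all $x,y \in \cX$ (the central factors $w_x, w_y$ drop out); as the $\cX$-basis commutators generate $N',$ this yields $\theta|_{N'} = \id.$ For (b), I would write $\alpha x = x c_x$ with $c_x \in N',$ so that $\alpha\inv x = x c_x\inv$ and both $\alpha,\alpha\inv$ fix $N'$ pointwise. Then on each $x \in \cX$ one computes, on the one hand, $\theta\alpha(x) = \theta(x)\,\theta(c_x) = x\inv w_x c_x$ using (a), and on the other hand $\alpha\inv\theta(x) = \alpha\inv(x\inv)\,w_x = (x c_x\inv)\inv w_x = x\inv w_x c_x,$ the two expressions agreeing precisely because $w_x$ and $c_x$ are central. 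Hence $\theta\alpha = \alpha\inv\theta$ on $\cX,$ so $\theta\alpha\theta\inv = \alpha\inv,$ which is (i). (Conceptually, this just records that $\theta$ acts as $-\id$ on $A$ and as $\id$ on $N',$ so conjugation negates the input and preserves the output of the cocycle $x \mapsto c_x.$)

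For (ii), let $t \in \out N$ be an \At-symmetry, induced by an \At-symmetry $\theta \in \aut N.$ Since $(\av\theta)^2 = (-\id_A)^2 = \id_A,$ the square $\theta^2$ lies in $\ia N.$ Applying part (i) with $\alpha = \theta^2$ and noting that $\theta$ commutes with $\theta^2,$ I get $\theta^2 = \theta\,(\theta^2)\,\theta\inv = (\theta^2)\inv,$ so $(\theta^2)^2 = 1.$ Because $\ia N$ is torsion-free, this forces $\theta^2 = 1$ in $\aut N,$ and a fortiori $t^2 = 1$ in $\out N.$ (Alternatively, ingredient (a) gives $\theta^2 x = \theta(x\inv w_x) = (x\inv w_x)\inv w_x = x$ directly.)

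The only genuine work is in part (i), and within it the bookkeeping of centrality in the class-two group—tracking that every $c_x$ and $w_x$ lies in $N' = \Za(N)$—which is exactly what makes the two one-line computations in (b) collapse to the same element $x\inv w_x c_x.$ Once (i) is in place, part (ii) is immediate, the torsion-freeness of $\ia N$ recorded earlier in the excerpt doing all the remaining work.
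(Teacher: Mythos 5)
Your proof is correct, but it is organized differently from the paper's. For part (i) the paper simply cites part (a) of Lemma 2.1 of the earlier paper on $\aut N$ for free two-step nilpotent groups, whereas you give a self-contained computation: $\theta$ acts as $-\id$ on $A$ and (by the basis-commutator calculation $[x\inv w_x,y\inv w_y]=[x\inv,y\inv]=[x,y]$) as the identity on $N'=\Za(N)$, after which $\theta\alpha(x)=x\inv w_x c_x=\alpha\inv\theta(x)$ on a basis. All the centrality bookkeeping you do is sound, and your argument has the advantage of making the lemma independent of the external reference. For part (ii) the paper decomposes an arbitrary \At-symmetry as $\theta=\theta^*\alpha$ with $\theta^*$ a genuine symmetry (so $\theta^{*2}=\id_N$) and $\alpha\in\ia N,$ and then computes $\theta^2=\theta^{*2}\alpha\inv\alpha=\id_N$ directly from (i); you instead apply (i) to $\alpha=\theta^2\in\ia N$ itself, deduce $(\theta^2)^2=\id_N,$ and invoke the torsion-freeness of $\ia N$ recorded in Section 2. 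Both routes prove the stronger statement that every \At-symmetry is already an involution in $\aut N$; the paper's decomposition avoids appealing to torsion-freeness, while yours avoids introducing the auxiliary symmetry $\theta^*$ (and your parenthetical direct computation $\theta^2x=x$ recovers the paper's conclusion in one line). No gaps.
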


\begin{proof} (i) By part (a) of Lemma 2.1 in \cite{To_2step}.

(ii) Suppose that $t \in \out N$ is induced by
an automorphism $\theta \in \aut N$ and
$$
\av t = \av \theta = -\id_A.
$$
Then there is a symmetry $\theta^*$ in the
group $\aut N$ and an \iat-automorphism $\alpha \in \aut N$ such that
$$
\theta  =\theta^* \alpha.
$$
By (i),
$$
\theta^2 = \theta^* \alpha \theta^* \alpha
=\theta^*{}^2 \alpha\inv \alpha = \id_N,
$$
whence
$$
t^2 = (\widehat \theta)^2 = \widehat{\theta^2}=1.
$$
\end{proof}

\begin{Cor} \label{Centerless}
The group $\out N$ is centerless.
\end{Cor}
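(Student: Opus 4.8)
The plan is to take an arbitrary central element $s \in \Za(\out N)$ and prove that $s = 1$, first locating the image $\av s \in \aut A$ and then exploiting how centrality interacts with \At-symmetries and \iat-automorphisms. The starting observation is that the homomorphism $\av{\phantom a} \colon \out N \to \aut A$ is surjective: any $\f \in \aut A$ lifts to an endomorphism of $N$ by lifting the basis $\av{\cX}$, and such \iat-endomorphisms are automatically invertible, so $\av{\phantom a}$ is onto. Consequently $\av s$ is central in $\aut A$. Since $A$ is a free abelian group of infinite rank, a routine computation identifies this center: commuting with the sign change of a single basis vector forces $\av s$ to be diagonal in $\av\cX$, commuting with permutations of $\av\cX$ forces a common diagonal entry, and commuting with a single transvection forces that entry to be $\pm 1$. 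Hence $\Za(\aut A)=\{\pm\id_A\}$ and so $\av s = \id_A$ or $\av s = -\id_A$.

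I would dispose of the case $\av s = -\id_A$ first. Here $s$ is an \At-symmetry, induced by some \At-symmetry $\sigma \in \aut N$. For any $\alpha \in \ia N$, centrality gives $s \widehat\alpha s\inv = \widehat\alpha$, while Lemma \ref{Symmies:Basics}(i) applied to $\sigma$ gives $s \widehat\alpha s\inv = \widehat{\sigma\alpha\sigma\inv} = \widehat\alpha\inv$; comparing, $\widehat\alpha^2 = 1$, that is $\alpha^2 \in \inn N$, for \emph{every} $\alpha \in \ia N$. This is false. Choosing distinct $x,y,w \in \cX$, the \iat-automorphism $\alpha$ with $\alpha x = x[y,w]$ and $\alpha v = v$ for $v \in \cX \setminus\{x\}$ satisfies $\alpha^2 x = x[y,w]^2$ and $\alpha^2 v = v$ otherwise; were this equal to $\tau_z$, the conditions $[z,v]=1$ for all $v \ne x$ would force $z \in \str{x}N'$, whence $[z,x]=1 \ne [y,w]^2$, a contradiction. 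Thus $\widehat\alpha^2 \ne 1$, and the case $\av s = -\id_A$ cannot occur.

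It remains to treat $\av s = \id_A$, where $s$ is induced by some $\alpha \in \ia N$, so $s = \widehat\alpha$. Taking any genuine \At-symmetry $t = \widehat\theta$ (for instance the image of a basis-inverting symmetry) and repeating the centrality-versus-Lemma \ref{Symmies:Basics}(i) comparison yields $s = tst\inv = s\inv$, so $s^2 = 1$ and $\alpha^2 \in \inn N$, say $\alpha^2 = \tau_z$. If $z \in N'$ then $\tau_z = \id_N$, so $\alpha^2 = \id_N$ and hence $\alpha = \id_N$ because $\ia N$ is torsion-free, giving $s = 1$; if $z \in N \setminus N'$ then Claim \ref{iaSqEqTauz} shows $\alpha$ is inner, so again $s = 1$. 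This exhausts the cases and proves $\out N$ centerless. I expect the only genuinely delicate ingredients to be the surjectivity of $\av{\phantom a}$ and the determination of $\Za(\aut A)$; once those are in place, everything reduces to direct applications of Lemma \ref{Symmies:Basics} and Claim \ref{iaSqEqTauz}.
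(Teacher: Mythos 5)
Your proof is correct and follows essentially the same route as the paper: reduce to $\av s=\pm\id_A$ via surjectivity of $\out N\to\aut A$ and the center of $\aut A$, then use Lemma \ref{Symmies:Basics}(i) together with Claim \ref{iaSqEqTauz} to rule out both a central \At-symmetry and a central nontrivial element of $\widehat{\iat(N)}$. The only difference is organizational (you split into two cases and explicitly exhibit an \iat-automorphism whose square is not inner, whereas the paper runs one unified commutation computation), so no further comment is needed.
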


\begin{proof} Suppose $r$ is a central element
of the group $\out N.$ Then for every $s \in \out N,$
$$
s \cdot r = r \cdot s \To \av s \cdot \av r = \av r \cdot \av s,
$$
and this implies
that $\av r$ is central
in $\aut A,$ because the natural
homomorphism $\aut N \to \aut A$ and
hence the induced homomorphism $\out N \to \aut A$
are surjective \cite[Th. 5a]{Malt}.

Accordingly, $\av r = \pm \id_A,$ and so
$r$ is either induced by an IA-automorphism
of $N,$ or $r$ is an \At-symmetry in the group
$\out N.$

Take an IA-automorphism $\alpha \in \aut N$
such that $\widehat \alpha \ne 1$ and an \At-symmetry
$\theta \in \aut N,$ and suppose that
$$
\widehat \alpha \cdot \widehat \theta \cdot \widehat \alpha\inv =\widehat \theta.
$$
Then there is $z \in N$ satisfying
$$
\alpha \theta \alpha\inv = \tau_z \theta.
$$
By Lemma \ref{Symmies:Basics} (i), the
left-hand side of the last equality
is equal to $\alpha^2 \theta,$ and hence $\alpha^2 = \tau_z.$
By Claim \ref{iaSqEqTauz}, $\alpha = \tau_u,$ where
$u \in N,$ is an inner automorphism, which implies
that $\widehat \alpha=1,$ a contradiction. This demonstrates that neither $\widehat\alpha,$
nor $\widehat \theta$ can be a central element
of the group $\out N.$
\end{proof}

Let $G$ be a group.
Write $\cM_1(G)$ for
$G$ and let, for every natural number $k \ge 1,$ $\cM_{k+1}(G)$ be the family
of all $n$-ary relations on $\cM_k(G),$ where $n$ runs over $\N.$ Observe that the automorphism
group $\aut G$ acts in a natural way on all sets $\cM_k(G).$
When describing automorphisms of $G,$ it is often
necessary to show that a certain relation $R \in \cM_k(G)$
is invariant under all automorphisms of $G.$ This is the
case when the relation $R$ is the only realization of a
formula in a certain logic (e.g. the first-order, monadic second-order,
full second-order logic, etc.), and this formula is said to
define $R$ over $G.$ For convenience, we shall then say that
$R$ is \emph{definable} over $G.$
For stylistic purposes, in cases where a subset $S$
is definable over $G,$ we shall also say that $S$ is a \textit{definable subset} of
$G.$

In the case when a relation $R \in\cM_k(G)$
can be uniquely characterized by a formula of a certain
logic involving other relations $\{R_i : i \in I\}$
over $G,$ we shall say, extending the above agreement, that
$R$ is \emph{definable over $G$ with the parameters}
in the set $\{R_i : i \in I\}.$ Clearly, $R$ is then
invariant under any automorphism of $G$ under which all the relations
$R_i$ are invariant. For instance, if $X=\{x_i : i \in I\}$
is any subset of $G,$ the family
$\{\Za_G(x_i) : i \in I\} \in \cM_3(G)$
of all centralizers of the elements $x_i$ is definable over $G$
with parameters in $X.$

The reader is referred
to the paper \cite{To_2step} to find examples of definability
of relations over the group $\aut N$ by
means of the first-order and monadic second-order
logic. We should stress, however, that this
paper does not assume familiarity with the model theory;
whenever necessary, the reader can substitute
their own arguments to see that a particular
relation $R \in \cM_k(G)$ is preserved by all automorphisms
of $G=\out N.$ The highest order $k$
of the relations $R \in \cM_k(G)$ to be
considered below is equal to $4$
(as we shall need to characterize a set
of sets of subgroups of $G$ in Lemma \ref{def-o-A-bases}).

\begin{Prop}
An element
$t$ in the group $\out N$ is an \At-symmetry
if and only if $t$ is an involution
and any product of three conjugates
of $t$ is again an involution.
Consequently, the set of \At-symmetries is a definable
subset of the group $\out N.$
\end{Prop}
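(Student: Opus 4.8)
The plan is to begin with the reformulation that an element $r \in \out N$ is an \At-symmetry precisely when $\av r = -\id_A$: any lift $\sigma \in \aut N$ of $r$ satisfies $\av \sigma = \av r$, so $\av r = -\id_A$ makes $\sigma$ an \At-symmetry in $\aut N$, and the converse is immediate. Granting this, the forward implication is short. If $t$ is an \At-symmetry, then $t$ is an involution by Lemma \ref{Symmies:Basics}(ii); since $-\id_A$ is central in $\aut A$, every conjugate satisfies $\av{sts\inv} = \av s(-\id_A)\av s\inv = -\id_A$, so each conjugate of $t$ is again an \At-symmetry, and a product of three of them has $\av{\phantom a}$-image $(-\id_A)^3 = -\id_A$, hence is an \At-symmetry and therefore an involution, again by Lemma \ref{Symmies:Basics}(ii).

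For the converse, suppose $t$ is an involution such that every product of three of its conjugates is an involution, and put $a = \av t$, so $a^2 = \id_A$. I would first rule out $a = \id_A$: any lift $\sigma \in \aut N$ then lies in $\ia N$, and $t^2 = 1$ gives $\sigma^2 = \tau_z$; if $z \notin N'$ then $\sigma$ is inner by Claim \ref{iaSqEqTauz} and $t=1$, while if $z \in N'$ then $\sigma^2 = \id_N$ forces $\sigma = \id_N$ since $\ia N$ is torsion-free, again giving $t=1$ — both contradicting that $t$ is an involution. It remains to exclude $a \ne \pm\id_A$ and conclude $a = -\id_A$.

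Assume $a \ne \pm\id_A$. Then both eigenlattices of $a$ are nonzero, so I can choose primitive $p,q \in A$ with $ap=p$ and $aq=-q$. As $p$ and $q$ have finite support with respect to a basis of $A$, the rank-two purification $M = \{x \in A : nx \in \langle p,q\rangle \text{ for some } n \ge 1\}$ lies in a finite-rank free direct summand of $A$, is $a$-invariant, and is a direct summand $A = M \oplus C$; moreover $a_0 := a|_M$ is a trace-zero, determinant-$(-1)$ involution of $M \cong \Z^2$. The key computation is then internal to $\mathrm{GL}_2(\Z)$: conjugating $a_0$ by two suitable transvections yields three conjugates whose product $P$ has infinite order (for the representative $\mathrm{diag}(1,-1)$ one may take $P=\mathrm{diag}(1,-1)\cdot g\,\mathrm{diag}(1,-1)\,g\inv\cdot h\,\mathrm{diag}(1,-1)\,h\inv$ with $g,h$ the standard unipotents, a matrix of trace $-4$; the other $\mathrm{GL}_2(\Z)$-conjugacy type of such involutions is handled by an analogous calculation). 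Extending these conjugators by the identity on $C$ to $b_1,b_2,b_3 \in \aut A$ and using that $a$ is block upper triangular for $A = M \oplus C$, the product $b_1 a b_1\inv\, b_2 a b_2\inv\, b_3 a b_3\inv$ is block upper triangular with top-left block $P$, so its square has top-left block $P^2 \ne \id_M$; thus this product is not an involution. Lifting $b_1,b_2,b_3$ to $s_1,s_2,s_3 \in \out N$ along the surjection $\out N \to \aut A$, the element $s_1 t s_1\inv\, s_2 t s_2\inv\, s_3 t s_3\inv$ maps under $\av{\phantom a}$ to a non-involution and hence is not an involution, contradicting the hypothesis. Therefore $a = -\id_A$ and $t$ is an \At-symmetry.

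Finally, the conditions ``$t$ is an involution'' and ``every product of three conjugates of $t$ is an involution'' are expressed by $t^2 = 1 \wedge t \ne 1$ and $\forall s_1\forall s_2\forall s_3\,(s_1 t s_1\inv s_2 t s_2\inv s_3 t s_3\inv)^2 = 1$, so the set they cut out — just shown to coincide with the set of \At-symmetries — is a definable subset of $\out N.$ I expect the main obstacle to be the converse, and within it the reduction to a rank-two $a$-invariant direct summand together with the verification that the explicit triple of conjugates of $a_0$ has an infinite-order product across the $\mathrm{GL}_2(\Z)$-conjugacy types of trace-zero involutions; the remaining steps are formal manipulations with the homomorphism $\av{\phantom a}.$
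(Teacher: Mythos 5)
Your proof is correct, and its overall skeleton matches the paper's: the easy direction via Lemma \ref{Symmies:Basics}(ii) and the centrality of $-\id_A$, and the hard direction by pushing the condition down along the surjection $\out N \to \aut A$. The genuine difference is in how the key fact about $\aut A$ is obtained. The paper simply cites Lemma 2.1(c) of \cite{To_2step}, which already records that an involution of $\aut A$ satisfies the three-conjugates condition if and only if it equals $-\id_A$, and does not re-derive it; you instead prove this from scratch, reducing a hypothetical involution $a \ne \pm\id_A$ to a trace-zero involution on a rank-two invariant direct summand and exhibiting an explicit triple of conjugates in $\mathrm{GL}_2(\Z)$ with infinite-order product (your trace $-4$ computation for $\mathrm{diag}(1,-1)$ checks out; for the other integral conjugacy class, the swap matrix, the same unipotent conjugators happen to give an involution, so the ``analogous calculation'' does require different conjugators --- e.g.\ conjugating by $g$ and $g^2$ yields a product of trace $-2$ and determinant $-1$). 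You also explicitly exclude the case $\av t = \id_A$ using Claim \ref{iaSqEqTauz} and the torsion-freeness of $\ia N$ --- a step the paper leaves implicit (it is the same argument that appears inside Corollary \ref{Centerless}), but which is genuinely needed before the cited characterization of involutions in $\aut A$ can be invoked. What your route buys is self-containedness at the cost of the matrix casework; what the paper's route buys is brevity by leaning on the earlier work on $\aut N$ and $\aut A$.
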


\begin{proof} Note that, by part (c) of Lemma 2.1 in
\cite{To_2step}, the same group-theoretic condition,
say $\chi(v),$ where $v$ is a variable, defines \At-symmetries in the group $\aut N.$
The proof is based on the fact that an involution
$\sigma$ in the group $\aut A$ satisfies the condition
$\chi(v)$ if and only if $\sigma=-\id_A.$

Suppose that an involution $t \in \out N$
satisfies the condition $\chi(v).$ Then
there is an automorphism $\theta$ of $N$ which
induces $t$ such
that
$$
\theta^2  \in \inn N
$$
and for every $\s_1,\s_2,\s_3 \in \aut N$
$$
( \sigma_1 \theta \sigma_1\inv \cdot \sigma_2 \theta \sigma_2\inv
\cdot \sigma_3 \theta \sigma_3\inv )^2 \in \inn N.
$$
It follows that $\av\theta$ satisfies the condition $\chi(v)$ in the group $\aut A,$
whence
$$
\av t =\av \theta=-\id_A,
$$
or, in other words, $t$ is an \At-symmetry.

Conversely, if $t \in \out N$ is an \At-symmetry,
then $\av t=-\id_A,$ and given any conjugates
$t_1,t_2,t_3$ of $t,$ we have that
$$
\av t_1 \cdot \av t_2 \cdot \av t_3 =-\id_A.
$$
Therefore, $t_1 t_2 t_3$ is an \At-symmetry, and thus,
by Lemma \ref{Symmies:Basics}, an involution in the group $\out N.$
\end{proof}

\begin{Prop} \label{hatIA_is_def}
The image $\widehat{\iat(N)} \sle \out N$ of the group
$\ia N$ is a definable subgroup of the group $\out N.$
\end{Prop}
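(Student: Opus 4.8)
The plan is to characterize $\widehat{\ia N}$ intrinsically inside $\out N$ by combining two facts we have already established: first, that $\widehat{\ia N}$ is the kernel of the natural map $\out N \to \aut A$, so its elements are precisely those $t$ with $\bar t = \id_A$; and second, that the set of $\At$-symmetries is a definable subset of $\out N$ (by the previous Proposition), and every $\At$-symmetry maps to $-\id_A$ under the bar map. The key observation to exploit is that $\widehat{\ia N}$ sits inside the preimage of $\{\pm\id_A\}$ as an index-two subgroup, and that elements of $\widehat{\ia N}$ are distinguished from $\At$-symmetries by their behavior under squaring and conjugation.

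**First I would** try to define $\widehat{\ia N}$ as the set of those $t \in \out N$ for which $\bar t = \id_A$, and recover this condition group-theoretically. A clean route is to use the already-definable set $T$ of $\At$-symmetries. Since the bar map $\out N \to \aut A$ is surjective with $\widehat{\ia N}$ contained in $\ker(\bar{\phantom a})$, and since every $\At$-symmetry squares to $1$ (Lemma \ref{Symmies:Basics}) while $\bar{(\cdot)}$ sends $\At$-symmetries to $-\id_A$, the product of any two $\At$-symmetries lands in $\ker(\bar{\phantom a})$. So I would aim to show that $\widehat{\ia N}$ is exactly the set of products $t_1 t_2$ of two $\At$-symmetries that additionally lie in $\ker(\bar{\phantom a})$ — but since $\bar{t_1 t_2} = (-\id_A)(-\id_A) = \id_A$ automatically, every such product already maps to $\id_A$. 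The question is whether products of pairs of $\At$-symmetries exhaust $\ker(\bar{\phantom a})$, i.e.\ whether they generate or even equal $\widehat{\ia N}$.

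**The main obstacle** will be showing that $\widehat{\ia N}$ is definable as a set, not merely that it is an $\bar{\phantom a}$-invariant subgroup: I need a first-order (or monadic second-order) condition $\psi(v)$ satisfied exactly by elements inducing $\id_A$. The cleanest characterization I would pursue is this: an element $t \in \out N$ lies in $\widehat{\ia N}$ if and only if $t$ is \emph{not} an $\At$-symmetry but $t\theta$ \emph{is} an $\At$-symmetry for some (equivalently, every) $\At$-symmetry $\theta$. Concretely, fixing the definable set $T$ of $\At$-symmetries, I would define
$$
\psi(t) \colon\quad t \notin T \ \wedge\ \exists\,\theta\,(\theta \in T \ \wedge\ t\theta \in T),
$$
and verify that $\psi(t)$ holds exactly when $\bar t = \id_A$. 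The forward implication uses that $\bar t$ must be a value $\sigma$ with $\sigma \neq -\id_A$ (since $t \notin T$) yet $\sigma \cdot(-\id_A) = -\id_A$ (since $t\theta \in T$ forces $\bar t \cdot \bar\theta = -\id_A$), which pins down $\bar t = \id_A$; the reverse implication is immediate since if $\bar t = \id_A$ then $t \notin T$ and $t\theta$ maps to $-\id_A$, so lies in $T$.

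**Finally I would** confirm the subgroup claim: having established that $\widehat{\ia N} = \{t : \bar t = \id_A\}$ is definable (the complement argument shows $\psi$ carves out exactly the kernel of $\bar{\phantom a}$ intersected with the relevant fibers), it is a subgroup because it is literally $\ker(\bar{\phantom a})$, the kernel of the homomorphism $\out N \to \aut A$. The one point demanding care is the claim that $\ker(\bar{\phantom a})$ equals $\widehat{\ia N}$ and not something strictly larger: an element $s \in \out N$ with $\bar s = \id_A$ is induced by some $\sigma \in \aut N$ with $\bar\sigma = \id_A$, hence $\sigma \in \ia N$ by definition of $\ia$-automorphism, so indeed $s \in \widehat{\ia N}$. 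I expect this last verification to be routine given the definition of $\ia$-automorphisms recalled at the start of the section, while the genuine work is confirming that the condition $\psi$, expressed purely through the definable set $T$ of $\At$-symmetries, isolates the identity fiber of the bar map.
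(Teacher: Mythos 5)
Your characterization is correct and is essentially the paper's own argument: the paper's one-line proof observes that $\widehat{\iat(N)}$ is exactly the set of products of two \At-symmetries, which (since each \At-symmetry is an involution) is the same as your condition that $t\theta$ is an \At-symmetry for some \At-symmetry $\theta$. The extra clause $t \notin T$ in your formula is redundant, since $\overline{t\theta} = -\id_A$ with $\bar\theta = -\id_A$ already forces $\bar t = \id_A$, but this does no harm.
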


\begin{proof}
Any element in the group $\widehat{\iat(N)}$
is a product of two A-symmetries, and vice versa.
\end{proof}

\section{Symmetries and extremal involutions}

An involution $\sigma$ in the group $\aut A$
is called {\it extremal} if there is a basis
$\cB = \{x\} \cup \cY$ of the group $A$
such that $\sigma$ sends $x$ to the
inverse element
and fixes the set $\cY$ pointwise.

As in \cite{To_Towers, To_2step, To_Aut(N)}, we will use the same term for an involution
$\f$ in the group $\aut N$ for which there
is a basis of the group $N$ such that
$\f$ inverts an element of this basis,
and fixes all other elements. As before,
an involution $f \in \out N$ will be called
{\it extremal} if it is induced by an extremal
involution in the group $\aut N.$

An involution $\f$ in the group $\aut N$
(resp. an involution $f$ in the group $\out N$)
will be called {\it \At-extremal} if
the image $\av\f$ of $\f$ (resp. $\av f$ of $f$) in the group
$\aut A$ is an extremal involution.

\begin{Lem} \label{Exts:Basics}
Let $\f \in \aut N$ be an extremal
involution and let
$\cB =\{x\} \cup \cY$ be a basis
of the group $N$ such that $\f x =x\inv$
and $\f y=y$ for all $y \in \cY.$

{\rm (i)} Suppose an element $t \in N \setminus N'$
satisfies
$$
\f t \equiv t\inv \Mod{N'}.
$$
Then $t = x^m c$ for a suitable integer
number $m$ and a suitable element $c \in N'$
of the commutator subgroup $N'.$

{\rm (ii)} The centralizer of $\f$ in the
group $\aut N$ consists of the automorphisms
of the form $\tau_v \s_0,$ where $v \in \str{\cY}$
and where $\s_0$
is an automorphism that
fixes each of the subgroups $\str x$ and $\str \cY$ setwise.

{\rm (iii)} The group $K=\ia N$ can be decomposed
as
$$
K = K^+_\f \cdot K_\f^-,
$$
where
$$
K^+_\f = \{ \alpha \in K : \f \alpha \f = \alpha\}
$$
and
$$
K^-_\f = \{ \alpha \in K : \f \alpha \f = \alpha\inv\}.
$$
\end{Lem}

\begin{proof}
(i) Write $t$ as
$$
t = x^m z c,
$$
where $z \in \str \cY$ and $c \in N'.$ Using
the action on the abelianization, we see that, by the conditions,
$$
\av \f (\av t) = - \av t,
$$
whence
$$
\av\f(m \av x + \av z) = -m \av x - \av z,
$$
or
$$
-m \av x + \av z = -m \av z - \av z,
$$
which implies that $\av z= 0,$ as required.

Before moving on to the proof of the next part,
observe that if $[y_1,y_2]$ is a basis commutator
of the elements $y_1,y_2 \in \cY,$ then
$$
\f ( [y_1,y_2] )  = [y_1,y_2].
$$
On the other hand, if $y$ is in $\cY,$ then
$$
\f( [x,y] ) = [x\inv, y] = [x,y]\inv.
$$
Thus $\f$ either fixes, or inverts
every basis commutator with respect to the
basis $\cB.$ It then follows that every element $d$ of the
commutator subgroup $N'$ of $N$ can be uniquely
written
as
$$
d = d^+ d^-
$$
so that $\f$ fixes $d^+$ and inverts $d^-.$

(ii) Suppose that $\s \in \aut N$ commutes with
$\f$: $\s \f = \f \s.$ Then
$$
\s \f x = \f \s x \To \f(\s x) =(\s x)\inv.
$$
By (i), $\s x =x^m c,$ where $m \in \Z$ and
$c \in N'.$ Clearly then, $m=\pm 1.$ On the other
hand,
$$
\f( x^m c ) =  c\inv x^{-m}
$$
implies that $\f(c)=c\inv.$ The above analysis of the action
of $\f$ on the basis commutators
with respect to the basis $\cB=\{x\} \cup \cY$
then shows that
$$
c=[x,u] =xux\inv u\inv=[x\inv,u\inv]
$$
for a suitable $u \in \str\cY.$ Hence if $m=1,$
we have \stepcounter{equation}
\begin{equation*} \tag{\theequation{}a}
\s x = x c =x \cdot x\inv u\inv x u=u\inv x u=\tau_{u\inv}(x),
\end{equation*}
and if $m=-1,$
\begin{equation*} \tag{\theequation{}b}
\s x = x\inv c = x\inv \cdot xux\inv u\inv = u x\inv u\inv=\tau_u(x\inv).
\end{equation*}
Now let $y \in\cY.$ Then commutativity of $\f$ and $\s$
implies that
$$
\f (\s y) =\s y.
$$
As before, let us analyze when an element of the
form $x^k z d,$ where $k \in \Z,$ $z \in \str\cY$
and $d \in N',$ is fixed by $\f$: clearly,
$$
\f( x^k z d ) = x^k z d \iff x^{-k} z \f(d) = x^k z d
$$
holds if and only if $k=0$ and $\f(d)=d;$ the latter
condition holds if and only if $d \in \str{\cY}.$
We therefore see that $\s$ must preserve
the subgroup $\str{\cY}.$ By (\theequation{}a)
and (\theequation{}b),
in the case when $m=1,$ the automorphism $\tau_u \s$
fixes $x,$ and in the case when $m=-1,$ the automorphism
$\tau_{u\inv} \s$ inverts $x.$ In each of the cases,
the corresponding automorphism $\s_0=\tau_{u^{\pm 1}} \s$ also fixes
the subgroup $\str{\cY}$ setwise, and we are done.

(iii) Consider an arbitrary IA-automorphism $\alpha$
whose action on the basis $\{x\} \cup \cY$
is given by
\begin{alignat*} 2
\alpha x &= x c_x        &&  \\
\alpha y &= y c_y, \quad && (y \in \cY).
\end{alignat*}
Now if
\begin{alignat*} 2
\alpha_1 x &= x c_x^-        &&  \\
\alpha_1 y &= y c_y^+, \quad && (y \in \cY)
\end{alignat*}
and
\begin{alignat*} 2
\alpha_2 x &= x c_x^+        &&  \\
\alpha_2 y &= y c_y^-, \quad && (y \in \cY),
\end{alignat*}
it is easy to verify that
$$
\alpha = \alpha_1 \alpha_2
$$
and $\alpha_1 \in K^+_\f$ and $\alpha_2 \in K^-_\f.$
\end{proof}

For the sake of notational simplicity,
as it has been done in the proof we have just completed,
we shall use the letter $K$ to denote the group $\ia N$ of
all \iat-automorphisms of the group $N.$

Let $X$ be a set and let $\Sigma \le \mathrm{Sym}(X)$
be a subgroup of the symmetric group of $X.$
Then, as is traditional in the theory of permutation
groups, given a subset $U$ of $X,$
we shall denote by $\Sigma_{(U)}$ the pointwise
stabilizer of the set $U$ in the group $\Sigma$
and by $\Sigma_{\{U\}}$ the setwise stabilizer
of $U$ in $\Sigma.$

We need the following result which is an immediate
corollary of Lemma 2.1 in \cite{MN}.

\begin{Lem} \label{MN-gen-sym}
Let $\Pi=\mathrm{Sym}(X)$ be the symmetric group of an infinite
set $X$ and let $U,V$ be disjoint subsets of $X$ both having
cardinalities less than $|X|.$ Then
$$
\Pi=\str{\Pi_{(U)}, \Pi_{(V)}}.
$$
\end{Lem}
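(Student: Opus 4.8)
The plan is to prove the slightly sharper statement that every $\pi \in \Pi$ can be written as a product $\pi = p\,q\,p'$ with $p,p' \in \Pi_{(U)}$ and $q \in \Pi_{(V)}$; this immediately yields $\Pi = \str{\Pi_{(U)},\Pi_{(V)}}$. Throughout I write $\kappa = |X|$ and use repeatedly the elementary cardinal fact that the union of two sets of cardinality $< \kappa$ again has cardinality $< \kappa$, so that deleting such a set from $X$ leaves a set of full cardinality $\kappa$. In particular $|X \setminus U| = |X \setminus V| = \kappa$, and $\Pi_{(U)}$ may be viewed as the full symmetric group of the large set $X \setminus U$ acting by the identity on $U$ (similarly for $V$); this is the source of all the freedom the argument needs.

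The core of the argument is the following factorization claim: \emph{if $\sigma \in \Pi$ satisfies $\sigma(V) \cap U = \emptyset$, then $\sigma \in \Pi_{(U)}\,\Pi_{(V)}$.} To prove it I would build a permutation $q$ directly by prescribing it piecewise: send $\sigma\inv(U)$ onto $U$ by the rule $q(\sigma\inv(u)) = u$ for $u \in U$, fix $V$ pointwise, and extend by any bijection between the two remaining pieces $X \setminus (\sigma\inv(U)\cup V)$ and $X \setminus (U \cup V)$, both of which have cardinality $\kappa$. The hypothesis $\sigma(V)\cap U=\emptyset$, which is the same as $\sigma\inv(U)\cap V = \emptyset$, is exactly what makes these three prescriptions disjoint and so defines a genuine bijection fixing $V$ pointwise; thus $q \in \Pi_{(V)}$. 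Setting $p = \sigma q\inv$, one computes $q\inv(u) = \sigma\inv(u)$ and hence $p(u)= u$ for every $u \in U$, so $p \in \Pi_{(U)}$ and $\sigma = p\,q$.

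It then remains to reduce an arbitrary $\pi \in \Pi$ to one meeting the hypothesis of the claim. Since $V \sle X \setminus U$ has cardinality $< \kappa$ and $\pi\inv(U)$ has cardinality $< \kappa$, while $(X \setminus U)\setminus \pi\inv(U)$ has cardinality $\kappa$, I can choose $c \in \Pi_{(U)}$ — a permutation of the large set $X \setminus U$, fixing $U$ pointwise — that moves $V$ into $(X\setminus U)\setminus \pi\inv(U)$, so that $c(V)\cap \pi\inv(U)=\emptyset$. Then $\sigma := \pi c$ satisfies $\sigma(V)\cap U = \pi(c(V))\cap U = \emptyset$, so by the claim $\sigma = p\,q$ with $p \in \Pi_{(U)}$ and $q\in\Pi_{(V)}$; therefore $\pi = p\,q\,c\inv$ with $c\inv \in \Pi_{(U)}$, as desired.

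The main obstacle is really the factorization claim: the rest is cardinality bookkeeping. The subtle point there is to recognize that the disjointness condition $\sigma(V)\cap U=\emptyset$ is precisely what allows the two partial maps ``$\sigma\inv(U)\to U$'' and ``$V\to V$'' to glue into a single permutation — were $V$ to meet $\sigma\inv(U)$, some $v\in V$ would have to be sent simultaneously to itself and into $U$. Once this is isolated, the reduction step shows that the only genuine freedom needed is the ability to slide the small set $V$ off the small set $\pi\inv(U)$ inside the huge complement $X \setminus U$, which the strict inequalities $|U|,|V|<\kappa$ supply. (Alternatively, the statement follows at once from Lemma 2.1 of \cite{MN}.)
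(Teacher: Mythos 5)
Your proof is correct, but it is not the paper's route: the paper offers no argument at all for this lemma, simply recording it as an immediate corollary of Lemma~2.1 of \cite{MN}, whereas you give a self-contained elementary proof. Your factorization claim is sound --- the hypothesis $\sigma(V)\cap U=\emptyset$, equivalently $\sigma\inv(U)\cap V=\emptyset$, together with the standing hypothesis $U\cap V=\emptyset$ (needed for the disjointness of the ranges $U$ and $V$ of the two prescribed pieces of $q$), is exactly what makes $q$ a well-defined permutation fixing $V$ pointwise, and the cardinality bookkeeping in the reduction step is right: $(X\setminus U)\setminus\pi\inv(U)$ has full cardinality because the union of two sets of cardinality $<|X|$ still has cardinality $<|X|$. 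What your argument buys beyond the bare statement is the quantitative refinement $\Pi=\Pi_{(U)}\,\Pi_{(V)}\,\Pi_{(U)}$, i.e.\ every permutation is a product of at most three elements of the two stabilizers rather than merely lying in the subgroup they generate; this bounded-length factorization is in the spirit of Bergman-type results and is strictly more than the lemma asserts or than the paper uses (only generation is invoked, in the proofs of Proposition~\ref{def_o_symms} and Lemma~\ref{Ext:Z(f)}(iv)). The citation of \cite{MN} buys brevity and a pointer to the general theory of subgroups of infinite symmetric groups; your argument buys self-containedness at the cost of half a page. Either is acceptable here.
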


Everywhere below $\Gamma$ stands for the automorphism
group $\aut N$ of $N.$

\begin{Prop} \label{headStabs}
{\rm (i)} The set of all \At-extremal involutions
in the group $\out N$
is a definable subset of this group.

{\rm (ii)} Let $f \in \out N$ be an \At-extremal
involution and let $x \in N$ be a primitive element
such that $\av f(\av x)=-\av x.$ Then the group
$$
G(f) = \widehat{\Gamma_{(x)} K} =
\{s \in \out N : \av s(\av x) = \av x\}
$$
is definable in the group $\out N$ with the parameter
$f.$
\end{Prop}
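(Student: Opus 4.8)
The plan is to transfer both statements to the abelianization $A$. By Proposition \ref{hatIA_is_def} the subgroup $\widehat K$ is definable, and since $\inn N \sle \ia N$ the natural homomorphism $\out N \to \aut A$, $s \mapsto \av s$, is onto with kernel exactly $\widehat K$; thus $\aut A$ is realized as a definable quotient of $\out N$, and every subset of $\aut A$ cut out by a formula pulls back to a definable subset of $\out N$. For part (i) this already suffices. An involution $f$ is an \At-extremal involution precisely when $\av f$ is an extremal involution of $\aut A$, i.e.\ an involution whose inverted subgroup $\{a \in A : \av f(a) = -a\}$ is a rank-one direct summand of $A$. This rank-one condition is a definable condition on elements of $\aut A$, of the same type established for $\aut N$ in \cite{To_2step}; so its preimage, intersected with the definable set of involutions of $\out N$, is exactly the set of \At-extremal involutions.

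For part (ii) write $\s = \av f$, let $L = \str{\av x}$ be the rank-one subgroup inverted by $\s$, and consider in $\aut A$ the line-stabilizer $M = \{g : g(L) = L\}$ and the vector-stabilizer $S = \{g : g\av x = \av x\}$. Since $G(f) = \{s \in \out N : \av s \in S\}$ is the full preimage of $S$ and already contains $\widehat K$, it will be definable from $f$ as soon as $S$ is definable in $\aut A$ from $\s$. The line-stabilizer $M$ is definable from $\s$: an element $g$ lies in $M$ iff the conjugate extremal involution $g\s g\inv$ inverts the same line $L$ as $\s$, and ``two extremal involutions invert the same line'' is itself a definable incidence condition on the pair. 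The real work is to carve $S$ out of $M$.

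Here lies the only genuine difficulty. As $g$ runs over $M$ one has $g\av x = \pm\av x$, and $S$ is the index-two subgroup on which the sign is $+1$; but this sign is invisible on $A$ by conjugation alone — both $\s$ and all its $M$-conjugates invert $\av x$ — so $S$ cannot be picked out by its action on a vector and must instead be singled out intrinsically, as a subgroup of $M$. The plan is to prove that $S$ is the \emph{unique} subgroup of index two in $M$. It is the kernel of the sign homomorphism $M \to \{\pm 1\}$ given by the action on $L$, with $\s \mapsto -1$, so it is enough to show this is the only homomorphism of $M$ onto $\{\pm 1\}$, equivalently that $M^{\mathrm{ab}} \cong \{\pm 1\}$. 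Now $S = \{g : g\av x = \av x\}$ surjects onto $\aut(A/L)$ with kernel the normal abelian group of transvections $a \mapsto a + \phi(a)\av x$; one checks that these contribute nothing to $M^{\mathrm{ab}}$ because $\aut(A/L)$ has no nonzero coinvariants on them, while $\aut(A/L)$ — the automorphism group of an infinitely generated free abelian group — is perfect. Granting this, $M^{\mathrm{ab}} \cong \{\pm 1\}$, so $S$ is the unique index-two subgroup of $M$, a definable condition, and therefore $G(f) = \{s \in \out N : \av s \in S\}$ is definable with the parameter $f$.

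The main obstacle is thus entirely in the last step: establishing that $S$ is perfect (equivalently that the transvection kernel dies in $M^{\mathrm{ab}}$ and that $\aut(A/L)$ has trivial abelianization), so that no spurious second index-two subgroup of $M$ can appear. This is exactly where Lemma \ref{MN-gen-sym} is meant to enter: I would reduce the perfectness of $\aut(A/L)$ to the corresponding generation statement for its large symmetric subgroups $\mathrm{Sym}(X)$, each of which, by the lemma, is generated by the two point-stabilizers $\Pi_{(U)}, \Pi_{(V)}$ of disjoint subsets of smaller cardinality, forcing such generators into the commutator subgroup. Everything else — the reduction through the definable kernel $\widehat K$, the definability of extremal involutions, and the definability of the line-stabilizer $M$ — is either immediate or routine; the delicate point to verify carefully is this uniqueness of the index-two subgroup.
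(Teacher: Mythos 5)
Your reduction framework is exactly the paper's: both arguments use Proposition \ref{hatIA_is_def} to realize $\aut A$ as the quotient of $\out N$ by the definable subgroup $\widehat K,$ and then pull definable subsets of $\aut A$ back to $\out N.$ For part (i) this coincides with the paper, which simply cites Lemma 1.4 of \cite{To_FreeAb} for the definability of the extremal involutions in $\aut A.$ For part (ii) the paper again only cites prior work --- Lemma 1.2 of \cite{To_Aut(A)} states outright that the stabilizer of $\av x$ is definable in $\aut A$ with an extremal involution inverting $\av x$ as a parameter --- whereas you try to reprove that lemma from scratch by exhibiting the stabilizer $S$ as the unique index-two subgroup of the line-stabilizer $M.$ That is a legitimate alternative strategy, and the uniqueness claim is correct provided $S$ is perfect, since then $M^{\mathrm{ab}}\cong\{\pm 1\}$ and $S=[M,M].$

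Two steps of your in-line replacement are not actually discharged, however. First, you declare that ``two extremal involutions invert the same line'' is a routine definable incidence condition. It is not routine: two extremal involutions inverting the same rank-one summand need not commute (already in $\Z^2,$ the involutions inverting $e_1$ and fixing $e_2,$ respectively $e_1+e_2,$ fail to commute), so the relation cannot be read off from commutation, and pinning it down group-theoretically is precisely the technical content of the results in \cite{To_FreeAb, To_Aut(A)} that the paper leans on. Second, your route to the perfectness of $\aut{A/L}$ via Lemma \ref{MN-gen-sym} does not work as stated: generation of $\sym{X}$ by two point stabilizers says nothing by itself about commutators in $\aut{A/L}.$ What is needed is a moietous-generation statement combined with an infinite swindle expressing each moietously supported automorphism as a product of commutators --- the analogue for free abelian groups of the perfectness of $\aut N$ that the paper quotes from \cite{To_Smallness4Nilps} in the proof of Lemma \ref{Ext:Z(f)}. (The remaining ingredient, that the transvection kernel dies in $M^{\mathrm{ab}},$ does follow from a shift swindle on $\mathrm{Hom}(A/L,\Z),$ so that part of your sketch is sound.) With these two points either proved or replaced by the citations the paper uses, your argument goes through.
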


\begin{proof}
(i) By Lemma 1.4 in \cite{To_FreeAb}, the extremal
involutions form a definable subset
of the group
$\aut A,$ whence, by Proposition \ref{hatIA_is_def},
the result follows.

(ii) By Lemma 1.2 in \cite{To_Aut(A)}, the stabilizer
$\av \Gamma_{(\av x)}$ is definable in the
group $\av\Gamma=\aut A$ with any extremal involution
sending $\av x$ to the inverse element as a parameter,
and thus with the parameter $\av f.$
Apply then Proposition \ref{hatIA_is_def} again
to complete the proof.
\end{proof}

Let $\{f_i : i \in I\}$ be a family of \At-extremal
involutions from the group $\out N,$ whose
images in the group $\aut A$ are pairwise
distinct. For each
$i \in I,$ take a primitive element $x_i \in N$
satisfying $\av f_i(\av x_i)=-\av x_i.$
Consider the family $\cG=\{G(f_i) : i \in I\},$
where the subgroups $G(f_i)$ are defined
in
Proposition \ref{headStabs}.
We shall call $\cG$
an {\it  \At-basis modelling ensemble} over the group
$\out N$ if the set $\{\av x_i : i \in I\}$
is a basis of the group $A.$

\begin{Lem} \label{def-o-A-bases}
The set of all \At-bases modelling ensembles
is a definable object over the group $\out N.$
\end{Lem}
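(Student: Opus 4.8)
The plan is to write down a single group-theoretic condition, of quantifier complexity reaching the level $\cM_4(\out N)$, that a family $\cG = \{G_i : i \in I\}$ of subgroups of $\out N$ satisfies precisely when it is an \At-bases modelling ensemble. The guiding idea is to push everything down to the abelianization: since $\widehat{\ia N}$ is a definable subgroup of $\out N$ (Proposition \ref{hatIA_is_def}) and is exactly the kernel of the surjection $\av{\phantom a} : \out N \to \aut A$, the quotient $\aut A$ is interpretable in $\out N$, and a condition definable over $\aut A$ on the images of the $G_i$ transfers to a condition definable over $\out N$.

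First I would impose the two easy clauses. By Proposition \ref{headStabs}(i) the \At-extremal involutions form a definable subset of $\out N$, and by Proposition \ref{headStabs}(ii) the subgroup $G(f)$ is definable uniformly from the parameter $f$; hence ``$H = G(f)$ for some \At-extremal involution $f$'' cuts out a definable class of subgroups, and the requirement that every member of $\cG$ lie in this class is expressible. A subgroup of the form $G(f)$ is the full $\av{\phantom a}$-preimage of the point stabilizer $\av\Gamma_{(\av x)}$, so it recovers the line $\str{\av x}$ and therefore $\av f$; thus the stipulation that the $\av f_i$ be pairwise distinct reduces to the $G_i$ being pairwise distinct subgroups.

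The substance is the clause ``$\{\av x_i : i \in I\}$ is a basis of $A$'', which I would read off from the way $\out N$ acts by conjugation on $\cG$. Conjugation by $s \in \out N$ sends $G(f_i)$ to the subgroup attached to the line $\av s\,\str{\av x_i}$, so the setwise stabilizer $\{s : s\cG s\inv = \cG\}$ induces, via $\av{\phantom a}$, exactly the automorphisms of $A$ permuting the family of lines $\{\str{\av x_i}\}$, and, $\av{\phantom a}$ being surjective, every such line-permuting automorphism arises this way. I would then characterize bases by two demands on these primitive lines: that their common stabilizer be trivial, i.e. $\bigcap_i G_i = \widehat{\ia N}$ (equivalently $\bigcap_i \av\Gamma_{(\av x_i)} = \{\id_A\}$), which forces the $\av x_i$ to span $A$ up to finite index; and that the induced action on $\{\str{\av x_i}\}$ realize the \emph{full} symmetric group $\sym I$, every permutation of the lines being implemented by an automorphism of $A$. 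A genuine basis plainly satisfies both, since permuting basis vectors (with arbitrary signs) is an automorphism; and it is the realizability of \emph{all} transpositions, in the presence of primitivity and $\mathbb Q$-spanning, that should pin the family down to an honest $\mathbb Z$-basis. This is precisely the point at which Lemma \ref{MN-gen-sym}, governing generation of symmetric groups by stabilizers of small sets, enters, and where the definability results for $\aut A$ of \cite{To_FreeAb, To_Aut(A)} can be invoked to present the basis condition over $\aut A$ in definable form.

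The step I expect to be the main obstacle is exactly this last equivalence, namely that the combination ``each line primitive, common stabilizer trivial, full symmetric action on the lines'' captures $\mathbb Z$-bases and nothing else. The delicate half is integrality: primitivity together with $\mathbb Q$-independence and $\mathbb Q$-spanning does not suffice, as the family $\{e_1+e_2,\,e_1-e_2,\,e_3,\,e_4,\dots\}$ (with $e_1,e_2,\dots$ a basis of $A$) consists of primitive, $\mathbb Q$-independent, $\mathbb Q$-spanning vectors that generate only an index-two subgroup. One checks that for such a family certain transpositions of lines cannot be realized inside $\aut A$, the obstruction being visible already on a rank-two summand, where swapping two lines forces a division by the index---so the full symmetric group fails to act; making this failure systematic, for an arbitrary proper-finite-index family, is the crux of the argument and is where the careful summand-by-summand analysis (or the imported $\aut A$ machinery) must be carried out. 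Once the equivalence is secured over $\aut A$, pulling it back along the definable quotient $\out N \to \aut A$ delivers the definition of the collection of \At-bases modelling ensembles, at the announced level $\cM_4(\out N)$.
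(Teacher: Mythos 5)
Your reduction to $\aut A$ via the definable kernel $\widehat{\iat(N)}$ and your first clauses (each $G_i$ of the form $G(f)$, pairwise distinct) agree with the paper, but the heart of your definition --- that ``primitive lines, trivial common stabilizer, full symmetric action on the lines'' characterizes $\Z$-bases --- is precisely the step you do not prove, and it is a genuine gap rather than a verification you may defer: you exhibit the obstruction only on the single example $\{e_1+e_2,\,e_1-e_2,\,e_3,\dots\}$ and then state that ``making this failure systematic \dots is the crux of the argument and \dots must be carried out.'' Worse, one of your intermediate claims is false as stated: triviality of $\bigcap_i \av\Gamma_{(\av x_i)}$ does \emph{not} force the $\av x_i$ to span a finite-index subgroup. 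Take $A=\bigoplus_{n\ge 1}\mathbf{Z}e_n$ and the primitive, linearly independent vectors $v_n=(n+1)e_{n+1}-e_n$; they generate the kernel of the surjection $A\to\mathbf{Q}$, $e_n\mapsto 1/n!$, a subgroup of infinite index whose pointwise stabilizer is nevertheless trivial because $\mathrm{Hom}(\mathbf{Q},A)=0$. So the correctness of your proposed formula rests entirely on an unestablished (and delicate) lattice-theoretic equivalence, and the spanning clause would have to be reworked even before that.

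The paper takes a different and much lighter route that sidesteps the integrality problem altogether. Linear independence of $\{\av x_i\}$ is extracted from the requirement that the involutions $f_i$ pairwise commute modulo $\widehat{K}$ (citing the argument in \cite{To_Aut(A)}). Spanning over $\mathbf{Z}$ is obtained not from any symmetric-group action but by quantifying over \emph{all} \At-extremal involutions: for every such $g$ with associated primitive element $x$ one demands finitely many indices with
$G(g) \ge G(f_{i_1}) \cap \dots \cap G(f_{i_s})$, which forces $\av x \in \str{\av x_{i_1},\dots,\av x_{i_s}}$; since the primitive elements generate $A,$ the set $\{\av x_i\}$ generates $A$ over $\mathbf{Z}.$ This covering condition on stabilizers delivers integrality for free --- exactly the point your approach struggles with. (Note also that Lemma \ref{MN-gen-sym} plays no role in this lemma; the paper uses it later, in Proposition \ref{def_o_symms} and Lemma \ref{Ext:Z(f)}.)
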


\begin{proof} Let $\cF=\{f_i : i \in I\}$ be a
family of \At-extremal involutions in the group
$\out N$ such that the involutions $\av f_i$
are pairwise distinct. For every $i \in I,$ choose a primitive element $x_i \in N$ such that
$\av f_i (\av x_i) = -\av x_i.$

A natural (classical) condition to impose
on the family $\cF$ is that
the elements of the family $\cF$
are pairwise commuting modulo the subgroup $K=\ia N.$
If so, it is easy to see that the set
$$
\cD=\{\av x_i : i \in I\}
$$
is a linearly independent subset of the group $A$
(as explained in the third paragraph on page 3162 in \cite{To_Aut(A)}).

To ensure that the set $\cD$ generates the group
$A$ we work as follows. Take an arbitrary
\At-extremal involution $g \in \out N$ and a primitive
element $x \in N$ satisfying $\av g(\av x)=-\av x.$
We then require that
there be finitely many indices $i_1,i_2,\ldots,i_s \in I$
such that
$$
G(g) \ge G(f_{i_1}) \cap G(f_{i_2}) \cap \ldots \cap G(f_{i_s}).
$$
The argument in the first paragraph on page 3163
in \cite{To_Aut(A)} then shows that
$$
\av x \in \str{\av x_{i_1}, \av x_{i_2}, \ldots, \av x_{i_s}}.
$$
Therefore, $\cD$ is a basis of the group $A.$
So $\{G(f_i) : i \in I\}$ is an A-basis
modelling ensemble provided that the conditions we have
considered in the process of the proof are true.
\end{proof}

We require the following concept
of the normalizer
of a family of subgroups.  For a group $G$
and a family $\mathcal{H} = \{H_i : i \in I\}$
of subgroups of $G,$ the \emph{normalizer} of
$\mathcal{H}$ in $G$ is the subgroup of
$G$ consisting of all elements $g$ such that
$$
g \cH g\inv =\{g H_i g\inv : i \in I\}=\cH
$$

\begin{Prop} \label{def_o_symms}
Let $\cG=\{G_i : i \in I\}$ be an \At-basis
modelling ensemble over the group $\out N.$

{\rm (i)} Consider an \At-symmetry $t \in \out N$
which commutes with each element
of the normalizer of the family $\cG$ in $\out N$ modulo
the subgroup $\widehat K^2.$ Then $t$ is a symmetry in
the group $\out N,$ and a suitable symmetry
$\theta \in \aut N$ in the preimage
of $t$ acts canonically---by inverting
all elements---on some basis of the group $N$
associated with the ensemble $\cG.$

{\rm (ii)} Consequently, the symmetries form a definable
subset
of the group $\out N.$
\end{Prop}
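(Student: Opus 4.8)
The plan is to realize the normalizer of $\cG$ concretely and then read off, from the commutator hypothesis, enough parity information about a representing involution to straighten it into a genuine symmetry. First I would lift the ensemble to a basis: choose primitive $x_i\in N$ with $\av f_i(\av x_i)=-\av x_i$, so that $\{\av x_i\}$ is a basis of $A$; since these images form a basis of $A$, the $x_i$ themselves form a basis $\cX$ of $N$. Each $G_i$ is the stabilizer of the line $\Z\av x_i$, so $g\in\out N$ normalizes $\cG$ exactly when $\av g$ permutes $\{\pm\av x_i\}$, i.e. acts as a signed permutation of $\cX$ on $A$. As $\av{\cdot}\colon\out N\to\aut A$ is onto with kernel $\widehat K$, the normalizer $\mathcal N$ of $\cG$ is the full preimage of this signed-permutation group, generated by $\widehat K$ together with the images $\widehat{s_j}$ of the sign changes $s_j\colon x_j\mapsto x_j\inv$ and the images $\widehat{p_\pi}$ of the basis permutations.

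Next I would fix a representative. Since $\av t=-\id_A$, Lemma \ref{Symmies:Basics} lets me write a preimage as $\theta=\theta_0\beta$, where $\theta_0$ inverts every element of $\cX$ and $\beta\in K$ is the \iat-automorphism $x_i\mapsto x_i c_i$ (with $c_i\in N'$); a short computation with Lemma \ref{Symmies:Basics}(i) shows $\theta$ is an involution. Because $\inn N\le K$ (every inner automorphism is an \iat-automorphism), the hypothesis $[t,g]\in\widehat K^2$ translates, for each generating element $g$ above, into $[\theta,\tilde g]\in K^2\cdot\inn N$ for the obvious lift $\tilde g$. Evaluating these commutators on $\cX$, the cases $g\in\widehat K$ and $g=\widehat{s_j}$ are automatically satisfied, since both produce \iat-automorphisms whose displacements are already perfect squares in $N'$. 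The content is carried by the transpositions.

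The heart of the argument is the transposition computation. Working in the $\mathbb{F}_2$-space $V=N'\otimes\mathbb{F}_2$ (basis the classes of the $[x_a,x_b]$) and writing $\bar c_i\in V$ for the reduction of $c_i$, a direct calculation gives the displacement of $[\theta,\widehat{p_{(jk)}}]$ modulo squares as $(1+p_*)\bar c_i$ for $i\ne j,k$ together with the mixed terms $\bar c_j+p_*\bar c_k$ and $\bar c_k+p_*\bar c_j$ at the indices $j,k$, where $p_*$ is the transposition action on $V$. Requiring this tuple to lie in the $\mathbb{F}_2$-span of the inner displacements $\iota^{(m)}$ (with $\iota^{(m)}_i=e_{\{i,m\}}$) forces two facts: (A) for each $i$ every coefficient of $\bar c_i$ on a pair \emph{not} containing $i$ equals a single constant $\kappa_i$, which must vanish because $c_i$ is a finite product while there are infinitely many such pairs; and (B) the remaining diagonal coefficients $\rho(i,m)=\bar c_i[\{i,m\}]$ satisfy $\rho(j,m)=\rho(k,m)$ for all $j,k\ne m$, i.e. $\rho(\cdot,m)=\epsilon_m$ is independent of its first argument.

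Granting (A) and (B), the conclusion of (i) is bookkeeping. The family $(\epsilon_m)$ has finite support, so $z=\prod_m x_m^{\epsilon_m}$ defines an inner automorphism, and replacing $\theta$ by the preimage $\theta'=\tau_z\theta$ shifts each $\bar c_i$ by $\sum_{m}\epsilon_m e_{\{i,m\}}$, which by (A) and (B) annihilates it; thus every adjusted displacement $c_i'$ is a perfect square $e_i^{-2}$ in $N'$. A final check shows $\theta'$ inverts each $z_i=x_i e_i$, and since $\av z_i=\av x_i$ the set $\{z_i\}$ is a basis of $N$; hence $\theta'$ is a genuine symmetry representing $t$ and acting canonically on a basis associated with $\cG$, which is exactly (i). For (ii) I would add the converse: for a genuine symmetry $\theta_0$ inverting a basis $\cX$, taking $\cG$ to be the ensemble attached to $\cX$ and using the monomial lifts, $\theta_0$ commutes \emph{exactly} with the signed-permutation lifts and commutes with $\widehat K$ modulo $\widehat K^2$, so $t$ satisfies the stated condition. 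Combining both directions characterizes symmetries as those \At-symmetries that commute modulo $\widehat K^2$ with the normalizer of some \At-basis modelling ensemble; since \At-symmetries are definable, \At-basis modelling ensembles are definable (Lemma \ref{def-o-A-bases}), and $\widehat K$ (hence $\widehat K^2$) and normalizers are definable (Proposition \ref{hatIA_is_def}), this condition is definable, giving (ii). The main obstacle is step (B): extracting from the single ``modulo $\widehat K^2$'' congruence the fact that the diagonal parities are independent of the index, since only this independence allows one \emph{global} inner automorphism to turn every $c_i$ into a square simultaneously.
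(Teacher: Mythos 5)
Your argument is essentially correct, but it takes a genuinely different route from the paper's. The paper never looks at the individual coordinates $c_i$: it lets $\Pi\cong\sym\cX$ act on the set of cosets $\{\tau_w\theta K^2\}$, notes that every orbit has size at most $\rank(N)$, invokes Theorem $2^{\flat}$ of \cite{DiNeuTho} to place a pointwise stabilizer $\Pi_{(U)}$ inside the stabilizer of the coset $\theta K^2$, uses Lemma \ref{MN-gen-sym} to reduce everything to a single moiety-swapping involution $\rho$, extracts from that one congruence a correcting inner automorphism $\tau_u$, and finally hands the corrected element $\tau_u\theta$ (now fixed modulo $K^2$ by all of $\Pi$) to Lemma 3.3 of \cite{To_2step}, which yields $\theta^*\alpha^2$. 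You instead use only the transpositions, reading each congruence as an $\mathbb F_2$-linear condition on the images of the $c_i$ in $N'/(N')^2$; your conditions (A) and (B) do follow (transitivity of the finitary symmetric group on pairs avoiding $i$ gives (A), and the vanishing of the coefficient of $e_{\{j,m\}}$ in $\bar c_j+p_*\bar c_k$ gives (B)), and together they do produce the single global correcting element $\tau_z$, so part (i) goes through. Your route is self-contained --- no small-index theorem, no Macpherson--Neumann lemma, no appeal to \cite{To_2step} --- at the price of being wedded to the class-two coordinates, whereas the paper's argument is coordinate-free and is the one that transfers to other settings. Two inaccuracies worth fixing, neither fatal: $G_i$ is the stabilizer of the vector $\av{x_i}$, not of the line $\Z\av{x_i}$ (the identification of the normalizer with the preimage of the signed permutations survives, since a primitive vector and its negative have the same stabilizer); and that normalizer is \emph{not} generated by $\widehat K$ together with the single sign changes and the permutations, because the sign part is the full direct product $(\Z/2)^I$ rather than the direct sum --- this is harmless in (i), where you only use transpositions, but in the converse direction of (ii) you should run your $\alpha^{-2}$ computation against an arbitrary monomial lift (arbitrary, possibly infinitely supported, sign pattern), which it handles verbatim.
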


The condition in part (i) is reminiscent
of a similar condition
involving commutativity modulo the group
$$
K^2=\iat^2(N)=\{ \alpha^2 : \alpha \in K\}
$$
that defines the symmetries in the group
$\aut N$ (see Lemma 3.3 of \cite{To_2step}).

\begin{proof} (i) For every $i \in I,$ choose a primitive
element $x_i \in N$ such that
$$
G_i = \widehat{\Gamma_{(x_i)} K}.
$$
As $\cG$ is an A-basis modelling
ensemble, $\{\av x_i : i \in I\}$ is a basis
of the group $A,$ and hence the set $\cX=\{x_i : i \in I\}$
is a basis of the group $N$ (\cite[Th. 5a]{Malt}, \cite[31.25]{HN}).

Suppose that $t =\widehat \theta$
and write $\Pi \le \aut N$ for the group of all $\cX$-basis
permutations, that is, for the family
of automorphisms of $N$ which fix $\cX$ setwise;
observe that $\Pi \cong \sym\cX.$
Clearly,
each element of $\widehat \Pi$ fixes
the family $\cG$ setwise under the conjugation
action.

Then, by the conditions, for every $\pi \in \Pi,$
there exists $z \in N$ such that
\begin{equation}
\pi \theta \pi\inv \equiv \tau_z \theta \Mod{K^2}.
\end{equation}
From a broader perspective, the group $\Pi$
then acts by conjugation on the family
of cosets
$$
T=\{ \tau_w \theta K^2 : w \in N\}
$$
of the normal subgroup $K^2$ of the group $\aut N.$
Clearly, $|T| \le |N| = \rank(N),$ and every
$\Pi$-orbit in $T$ is of cardinality at
most $|\cX|=\rank(N).$ In particular, the index
$|\Pi : \mathrm{St}( \theta K^2 )|$
of the stabilizer $\mathrm{St}( \theta K^2 )$
of the coset $\theta K^2$ in the group $\Pi$
is at most $\rank(N)=|\cX|.$ By Theorem
$2^{\mbox{\normalsize$\flat$}}$ of \cite{DiNeuTho},
there is therefore a subset $U$ of $\cX$ of cardinality
less than
$|\cX|$
such that
$$
\mathrm{St}(\, \theta K^2\, ) \ge \Pi_{(U)}.
$$
Pick up a subset $V$ of cardinality $|U|$
in the complement $\cX \setminus U$ of $U,$ and consider
an involution $\rho \in \Pi$ which interchanges
$U$ and $V$ and fixes all other
elements of the basis $\cX.$
By Lemma \ref{MN-gen-sym}, $\Pi_{(U)}$
together with $\rho$ generates the group
$\Pi.$

According to (\theequation), there is $t \in N$
such that
\begin{equation} \label{action_of_rho}
\rho \theta \rho \equiv \tau_{t} \theta \Mod{K^2}.
\end{equation}
Suppose that $t \notin N'.$ Since $\tau_c = \id_N$ for every $c \in N',$
and since we are dealing with congruences
modulo the subgroup $K^2,$ we can assume
that $t$ is a product of pairwise
distinct letters $y_i,z_j$ from the basis $\cX,$
$$
t = \underbrace{y_1 \ldots y_{k}}_{a} \cdot
\underbrace{z_1 \ldots z_l}_{b},
$$
such that the first $k$ letters, whose product we shall denote
by $a,$ are in $U \cup V$ (if any)
and the rest of the letters, whose product
we shall denote by $b,$ are in $\cZ=\cX \setminus (U \cup V)$ (if any).
Suppose that $b \ne 1.$

Now, as $\cZ$ is infinite, there exists an $\cX$-permutational automorphism
$\s \in \Pi$ which fixes $U \cup V$ pointwise such that
\begin{equation}
\{z_1,\ldots,z_l\} \cap \s( \{z_1,\ldots,z_l\} ) = \varnothing.
\end{equation}
Clearly, $\s \in \Pi_{(U)} \le \mathrm{St}(\theta K^2)$ and $\s$ commutes with $\rho.$
Then conjugation of both sides of \eqref{action_of_rho}
by $\s$ produces,
as it is easy to see, the congruences
$$
\tau_t \theta \equiv \rho \theta \rho \equiv \tau_{\s(t)} \theta \Mod{K^2}
$$
Accordingly, $\tau_t \equiv \tau_{\s(t)} \Mod{K^2},$
whence
$$
\tau_{t\inv \s(t)} =\beta^2
$$
for a suitable IA-automorphism $\beta.$ By Claim \ref{iaSqEqTauz},
$\beta$ is an inner automorphism of the group $N,$ and we
obtain that the element
$$
t\inv \s(t) = b\inv a\inv \s( a b) =
b\inv a\inv a \s(b) = b\inv \s(b)
$$
is a square in the group $N$ modulo the commutator
subgroup $N',$ which is impossible in view
of (\theequation). Thus, we can assume that
$t \in \str{U \cup V}.$

Next, conjugating both sides of the congruence
\eqref{action_of_rho} by the involution $\rho,$ we
obtain that
$$
\theta \equiv \tau_{\rho(t)} \rho \theta \rho \equiv
\tau_{\rho(t)} \tau_t \theta \Mod{K^2},
$$
Arguing as above, we see that
the element $\rho(t) t$ is a square modulo the commutator subgroup $N'.$
We can assume that
$$
t = \underbrace{y_1 \ldots y_m}_c \cdot \underbrace{y_{m+1} \ldots y_k}_d,
$$
where the letters $y_i \in U\cup V$ are pairwise distinct,
$$
\rho(y_i) \not\in \cL=\{y_1,\ldots,y_{m}, y_{m+1},\ldots,y_k\}
$$
for all $i = m+1,\ldots,k$ and
\begin{equation}
\rho(\{ y_1, \ldots, y_m \}) = \{ y_1, \ldots, y_m \} = \cL \cap \rho(\cL),
\end{equation}
Suppose that $d \ne 1.$ In effect,
$$
\rho(\{y_{m+1},\ldots,y_k\}) \cap \{y_1,\ldots,y_{m}, y_{m+1},\ldots,y_k\}  =\varnothing,
$$
and hence the element $t \rho(t)$ cannot be a square
modulo the subgroup $N'.$ Thus $d =1,$ and in view
of (\theequation), we can assume that
the element $t$ is of the form
$$
t = \rho(u) u,
$$
where $u \in U$ is the product
of all elements of the set $U \cap \{ y_1, \ldots, y_m \}.$

Replacing $t$ with $\rho(u)u$ in \eqref{action_of_rho},
we therefore obtain that
$$
\rho \theta \rho \equiv \tau_{\rho(u) u} \theta \equiv
\tau_{\rho(u\inv)} \tau_u \theta,
$$
since evidently $\tau_{w\inv} \equiv \tau_w \Mod{K^2}$
for all $w \in N.$ It follows that
$$
\rho \tau_u \rho \cdot \rho \theta \rho
\equiv \rho \tau_u \theta \rho \equiv \tau_u \theta \Mod{K^2}.
$$
This implies that
$$
\pi \tau_u \theta \pi\inv \equiv \tau_u \theta \Mod{K^2}
$$
for {\it all} $\pi \in \Pi$ --- due to the fact
that $\Pi=\str{\Pi_{(U)},\rho}$ and due to
the fact
that all
elements of $\Pi_{(U)}$
fix $\tau_u$ under the conjugation action.

Now
denoting by
$\theta^* \in \aut N$ the
symmetry which inverts all elements of the
basis $\cX,$ we obtain, by Lemma 3.3 of \cite{To_2step}, that
$$
\tau_u \theta = \theta^* \alpha^2
$$
for an appropriate IA-automorphism $\alpha$ of $N.$
Evidently,
$$
\widehat{\tau_u \theta}=\widehat{\theta}=\widehat{ \alpha\inv \theta^* \alpha},
$$
and $\widehat \theta$ is a symmetry in the group
$\out N,$ as desired. Finally, as
$$
\alpha\inv \theta^* \alpha (\alpha\inv x_i) =\alpha\inv(x_i)\inv
$$
for all $i \in I$ and
$$
\av{\alpha\inv(x_i)}=\av x_i
$$
for all $i \in I,$ the basis $\{\alpha\inv(x_i) : i \in I\}$
is indeed one of the bases of $N$ associated with the
ensemble $\cG$ on which the symmetry $\theta^* \alpha^2$
acts canonically.

(ii)
The claim follows directly from (i).
\end{proof}

\begin{Prop} \label{Def_o_Exts}
Let an \At-basis modelling ensemble $\cG$ over
the group $\out N$ and a symmetry $t \in \out N$ satisfy the condition
in part {\rm (i)} of Proposition {\rm \ref{def_o_symms}}.

{\rm (i)} Suppose that an \At-extremal involution $f$
{\rm(}also{\rm)} normalizes the family
$\cG$ and commutes with $t.$ Then $f$
is an extremal involution in the
group $\out N.$

{\rm (ii)} Consequently, the extremal involutions
are definable in the group $\out N.$
\end{Prop}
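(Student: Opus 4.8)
\emph{Overview.} The plan is to first read off the action of $\av f$ on $A$ from the two hypotheses on $f$, and then to lift a genuine extremal involution of $N$ to a preimage of $f$ using the commutativity with $t$. By Proposition \ref{def_o_symms}~(i) the element $t$ is a symmetry, so I may fix a basis $\cX=\{x_i:i\in I\}$ of $N$, associated with the ensemble $\cG$, together with the symmetry $\theta\in\aut N$ inducing $t$ that inverts every $x_i$; thus $G_i=\widehat{\Gamma_{(x_i)}K}=\{s\in\out N:\av s(\av x_i)=\av x_i\}$ and $\{\av x_i:i\in I\}$ is a basis of $A$. A direct computation shows that conjugation by $f$ carries $G_i$ to $\{s:\av s(\av f\,\av x_i)=\av f\,\av x_i\}$; since the common fixed lattice of such a stabiliser is $\Za\av x_i$, a fixed-vector stabiliser determines its vector up to sign. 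Hence the hypothesis that $f$ normalises $\cG$ means exactly that $\av f$ is a signed permutation of the basis $\{\av x_i\}$, i.e. $\av f\,\av x_i=\pm\av x_{\pi(i)}$ for a suitable involution $\pi$ of $I$.

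Next I would combine this with the hypothesis that $f$ is \At-extremal. An integral linear-algebra argument then pins $\av f$ down completely: a signed permutation that is an extremal involution must be a single reflection. Indeed, each $2$-cycle of $\pi$ contributes a rank-$2$ block whose $(-1)$-eigensublattice fails to split off over $\Z$ (the relevant change of basis has determinant $\pm2$), so no $2$-cycles may occur; and among diagonal sign patterns only those with exactly one $-1$ have a rank-one $(-1)$-summand. Hence there is a unique index $i_0$ with $\av f\,\av x_{i_0}=-\av x_{i_0}$, while $\av f\,\av x_i=\av x_i$ for all $i\ne i_0$. I expect this integrality step, together with the bookkeeping needed to employ one and the same basis $\cX$ both for $\theta$ and for the analysis of $f$, to be the main point requiring care.

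With the action of $\av f$ determined, let $\f_0\in\aut N$ be the genuine extremal involution inverting $x_{i_0}$ and fixing the remaining elements of $\cX$, and let $\f$ be a preimage of $f$. Since $\f$ and $\f_0$ induce the same automorphism of $A$, I can write $\f=\f_0\alpha$ with $\alpha\in K=\ia N$. Now $\theta$ commutes with $\f_0$ (both are diagonal on $\cX$), while Lemma \ref{Symmies:Basics}~(i) gives $\theta\alpha\theta\inv=\alpha\inv$; a short manipulation then yields $\f\theta\f\inv=\f_0\alpha^2\f_0\inv\theta$. The hypothesis that $f$ commutes with $t$ says $\f\theta\f\inv\theta\inv\in\inn N$, and so $\f_0\alpha^2\f_0\inv\in\inn N$, that is $\alpha^2\in\inn N$. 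By Claim \ref{iaSqEqTauz} (the trivial case $\alpha^2=\id$ being handled by the fact that $K$ is torsion-free) the automorphism $\alpha$ is itself inner, whence $f=\widehat\f=\widehat{\f_0}$ is extremal. This proves~(i).

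For~(ii) I would assemble a defining formula. Every ingredient is already definable: the \At-extremal involutions by Proposition \ref{headStabs}~(i), the \At-basis modelling ensembles by Lemma \ref{def-o-A-bases}, the subgroup $\widehat K$ by Proposition \ref{hatIA_is_def}, and hence the symmetries and the condition of Proposition \ref{def_o_symms}~(i); normalising a family of subgroups and commuting with an element are plainly expressible. Thus the property ``$f$ is an \At-extremal involution and there exist an \At-basis modelling ensemble $\cG$ and a symmetry $t$ satisfying the condition of Proposition \ref{def_o_symms}~(i) such that $f$ normalises $\cG$ and commutes with $t$'' is definable over $\out N$ (quantifying over the element $t$ and the $\cM_4$-level object $\cG$). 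By part~(i) any $f$ with this property is extremal; conversely, a genuine extremal involution $\widehat{\f_0}$ satisfies it, taking $\cG$ and $t$ to be those attached to the basis on which $\f_0$ acts canonically, since the corresponding symmetry meets the condition of Proposition \ref{def_o_symms}~(i). Hence the displayed property defines precisely the extremal involutions of $\out N$.
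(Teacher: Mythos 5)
Your proposal is correct and follows essentially the same route as the paper: extract the basis $\cX$ and canonical symmetry $\theta$ from Proposition \ref{def_o_symms}, use normalization of $\cG$ together with \At-extremality to see that $\av f$ inverts exactly one $\av x_{i_0}$ and fixes the rest, write $\f=\f_0\alpha$ with $\alpha\in K$, and use commutation with $t$ plus Lemma \ref{Symmies:Basics}(i) and Claim \ref{iaSqEqTauz} to conclude that $\alpha$ is inner. Your treatment is in fact slightly more careful than the paper's at two points (the explicit exclusion of $2$-cycles in the signed permutation via the index-$2$ sublattice argument, and the degenerate case $\alpha^2=\id$ via torsion-freeness of $K$), but these are refinements of the same argument rather than a different approach.
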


\begin{proof} (i) Let
$\cG = \{G_i : i \in I\}.$
By Proposition \ref{def_o_symms}, there exists a basis
$\{x_i : i \in I\}$ of the group $N$ and
a symmetry $\theta \in \out N$ in the preimage
of $t$ such that for every
$i \in I,$ the element $G_i$ of the ensemble $\cG$ is of the form
$$
G_i =\widehat{\Gamma_{(x_i)} K}
$$
and $\theta x_i = x_i\inv.$
Next, choose an \At-extremal involution $\f \in \aut N$
in the preimage of $f.$ As $f$ normalizes the
family $\cG,$ we have that
$$
\f x_i \equiv x_i^{\pm 1} \Mod{N'}
$$
for all $i \in I.$ As $\f$ is an \At-extremal
involution, there is a $j \in I$ satisfying
$$
\f x_j \equiv x_j^{-1} \Mod{N'}
$$
and
$$
\f x_i \equiv x_i \Mod{N'}
$$
for all $i \in I \setminus \{j\}.$ Accordingly,
denoting by
$\f_0 \in \aut N$ the extremal involution
which inverts $x_j$ and fixes all other elements
of the basis $\{x_i : i \in I\},$ we have that
\begin{equation}
\f = \alpha \f_0,
\end{equation}
where $\alpha$ is an IA-automorphism of
the group $N.$ Clearly, $\f_0$ commutes
with $\theta.$

Since $f=\widehat \f$ and $t=\widehat \theta$ are commuting, we get that
$$
\f \theta \f\inv = \tau_z \theta
$$
for a suitable $z \in N,$ and then
$$
\alpha \f_0 \theta \f_0 \alpha\inv = \tau_z \theta,
$$
whence
$$
\alpha \theta \alpha\inv = \tau_z \theta,
$$
or
$$
\alpha^2 \theta = \tau_z \theta.
$$
Thus, $\tau_z =\alpha^2,$
which means that $\alpha=\tau_u$ is an inner automorphism
of the group $N$ determined by a suitable $u \in U.$
By (\theequation),
$$
f=\widehat \f = \widehat{\tau_u \f_0}=\widehat \f_0,
$$
and $f$ is indeed an extremal involution in $\out N,$ as claimed.

(ii)
The claim follows directly from (i).
\end{proof}

To simplify the notation,
we will denote simply by $\Za(*)$
the centralizer $\Za_{\aut N}(*)$
of a subset $*$ of the group $\aut N.$
The symbol $\Zo(*)$ will denote
the centralizer $\Za_{\out N}(*)$ of a subset $*$
of the group $\out N.$

\begin{Lem} \label{Ext:Z(f)}
Let $\f \in \aut N$ be an extremal involution and
let $\cB=\{x\} \cup \cY$ be a basis of the
group $N$ such that $\f x=x\inv$ and
$\f y=y$ for all $y \in \cY.$ Then the following
statements are true.

{\rm (i)} The centralizer of the image $\widehat \f$
of $\f$ in the group $\out N$ is induced by the centralizer
of $\f$ in the group $\aut N,$ or, in other words,
$$
\Zo(\widehat \f) = \widehat{\Za(\f)}.
$$

{\rm (ii)} The commutator subgroup
$\Zo(\widehat \f)'=[\Zo(\widehat \f),\Zo(\widehat \f)]$ of the group $\Zo(\widehat \f)$
is induced by the commutator subgroup
$\Za(\f)'=[\Za(\f),\Za(\f)]$ of the group $\Za(\f).$

{\rm (iii)} Every element
of the subgroup $\Zo(\widehat \f)'$ is induced
by a uniquely determined automorphism of the group $N$ which
preserves $x$ and fixes the subgroup
$\str{\cY}$ setwise, and vice versa. Accordingly,
$$
\Zo(\widehat \f)' \cong \aut N.
$$

{\rm (iv)} Let $z \in \cY$ and let $\pi \in \aut N$
be an involution which interchanges $x$ and $z,$
and fixes $\cY \setminus \{x,z\}$ pointwise.
Then the group $\out N$ is generated by
$\Zo(\widehat \f)'$ and $\widehat \pi.$
\end{Lem}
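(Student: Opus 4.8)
The plan is to set $H=\str{\Zo(\widehat\f)',\,\widehat\pi}$ and to prove $H=\out N$ by first capturing all basis permutations, then the whole kernel of the map $\out N\to\aut A$, and finally a full preimage of $\aut A$. Throughout I write $\cX=\{x\}\cup\cY$ for the basis of part (iii) and let $\Pi\le\aut N$ be the group of $\cX$-basis permutations.

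First I would recover the basis permutations. By part (iii), $\Zo(\widehat\f)'$ consists of the elements of $\out N$ induced by the automorphisms of $N$ that fix $x$ and preserve the free factor $\str\cY$, so it contains the image of the symmetric group $\sym\cY$ of basis permutations fixing $x$. Since $\widehat\pi$ is the image of the transposition $(x\,z)$ with $z\in\cY$, and $\sym\cY$ together with this single transposition generate the full symmetric group $\sym\cX$ of the basis $\cX$, the group $H$ contains the image $\widehat\Pi$ of $\Pi$. Consequently, for every basis element $w\in\cX$ the conjugate block $B_w=\widehat\rho\,\Zo(\widehat\f)'\,\widehat\rho^{-1}$, where $\rho\in\Pi$ sends $x$ to $w$, also lies in $H$; by naturality $B_w$ is the image in $\out N$ of the automorphisms fixing $w$ and preserving the complementary free factor $\str{\cX\setminus\{w\}}$, and in particular $B_w$ contains the image of $\ia{\str{\cX\setminus\{w\}}}$.

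Next I would show $\widehat K\sle H$, where $\widehat K$ is the kernel of the natural surjection $\out N\to\aut A$, that is, the image of $K=\ia N$. Here is the one real difficulty: $K$ is \emph{not} finitely generated, and a single $\alpha\in K$ may move infinitely many basis elements, so one cannot write $\widehat\alpha$ as a finite product of one-generator elementary automorphisms. The key observation is that each elementary operation ``multiply $x_i$ by the basis commutator $[x_j,x_k]$'' involves only the three indices $i,j,k$, and is realized inside the block $B_w$ for every $w\notin\{x_i,x_j,x_k\}$ (such a block fixes $w$, uses only commutators avoiding $w$, and may move arbitrarily many generators at once). Fixing four basis elements $w_1,\dots,w_4\in\cX$ and recalling that $\alpha$ acts by $x_i\mapsto x_i c_i$ with each $c_i\in N'$ a \emph{finite} product of basis commutators, I assign every elementary commutator-addition occurring in $\alpha$ to a block $B_{w_\ell}$ with $w_\ell$ avoiding its three indices---possible since $3<4$. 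Grouping the (central, mutually commuting) additions by the chosen $\ell$ expresses $\alpha=\beta_1\beta_2\beta_3\beta_4$ with $\beta_\ell\in\ia{\str{\cX\setminus\{w_\ell\}}}$, a genuine finite product; hence $\widehat\alpha\in H$ and $\widehat K\sle H$.

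Finally I would check that $H$ surjects onto $\aut A$. Passing to $A$, the image of $\Zo(\widehat\f)'$ is the full stabilizer of $\av x$ acting as $\aut{\str{\av\cY}}$ on $\str{\av\cY}$, while $\widehat\pi$ maps to the transposition of $\av x$ and $\av z$; together with the permutations, these images contain every elementary transvection, sign change and permutation of $A$. Since every element of $\aut A$ sends each basis vector to a finitely supported combination, it is reduced to the identity by finitely many such elementary moves once its (possibly infinite) shear part is absorbed into a single $\bar\pi$-conjugated block, exactly as in the preceding paragraph; thus the image of $H$ in $\aut A$ is all of $\aut A$. Combining the last two steps, $\widehat K\sle H$ and $H$ surjects onto $\out N/\widehat K\cong\aut A$, whence $H=\out N$. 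The main obstacle throughout is the infinite support of IA-automorphisms (and of elements of $\aut A$); it is overcome by the ``four blocks suffice'' counting, which converts infinite-support data into a bounded product of block automorphisms.
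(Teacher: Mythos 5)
Your proposal only addresses part (iv) of the lemma, taking parts (i)--(iii) as given; the paper proves all four, and (i) and (iii) require real work (e.g.\ ruling out the ``odd $m$'' case via the decomposition $K=K_\f^+K_\f^-$, and using perfectness of $\aut N$ for (iii)). Within part (iv) itself, your first two steps are sound: recovering $\sym\cX$ from $\sym\cY$ and the transposition $\widehat\pi$ is correct, and the ``four blocks suffice'' decomposition of an $\iat$-automorphism is a valid and rather nice argument --- it works precisely because $N$ has class two, so the commutator additions are central, pairwise commuting, and can be regrouped at will into four block automorphisms $\beta_1\beta_2\beta_3\beta_4$.

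The genuine gap is in your final step, the surjection onto $\aut A.$ You assert that an arbitrary $\phi\in\aut A$ ``is reduced to the identity by finitely many elementary moves once its (possibly infinite) shear part is absorbed into a single $\bar\pi$-conjugated block, exactly as in the preceding paragraph,'' but the preceding paragraph's trick does not transfer: a general automorphism of a free abelian group of infinite rank is not presented as a product of commuting elementary operations, and there is no canonical ``shear part'' to absorb. What you actually need is that $\aut A$ (equivalently, a full preimage of it in $\out N$) is generated by the corank-one blocks $B_w$ together with the basis permutations, and this is exactly the nontrivial generation theorem that the paper imports as Theorem 2.5 of \cite{To_Berg} (generation of $\aut N$ by $\cB$-moietous automorphisms), proved there by moiety arguments in the style of Macpherson--Neumann and Bergman, not by elementary reductions. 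The paper's route --- every $\cB$-moietous automorphism lies in $\Za(\f)'$ or a permutational conjugate of it, permutations come from $\Pi_{(x)}$, $\Pi_{(z)}$ and Lemma \ref{MN-gen-sym}, and then \cite{To_Berg} finishes --- is the step your argument silently replaces with an unproved claim. As written, your step (3) is essentially circular; to repair it you would have to either invoke the moietous generation theorem directly or reprove its $\aut A$-analogue, neither of which is a matter of ``finitely many elementary moves.''
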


\begin{proof} (i) Take an automorphism
$\s \in \aut N$ outerly commuting with
$\f$:
$$
\widehat \s \cdot \widehat \f = \widehat \f \cdot \widehat \s.
$$
Then
$$
\sigma \f \sigma\inv = \tau_t \f
$$
for an appropriate $t \in N.$ Clearly,
$$
\id_N=(\sigma \f \sigma\inv)^2 = \tau_t \f \cdot \tau_t \f,
$$
and hence
$$
\tau_{\f(t)} = \tau_t\inv = \tau_{t\inv}.
$$
It follows that
$$
\f t \equiv t\inv \Mod{N'},
$$
and so, in light of (i) of Lemma \ref{Exts:Basics}, $t=x^m c$ for some
$m \in \Z$ and $c \in N'.$ Consequently,
$$
\tau_t = \tau_{x^m c} =\tau_{x^m} =\tau_x^m.
$$

Now suppose
that
$m$ is even: $m=2k,$ where $k \in \Z.$
Then
$$
\sigma \f \sigma\inv = \tau_x^{2k} \f  = \tau_x^k \f \tau_x^{-k},
$$
whence
$$
\tau_{x}^{-k} \sigma \cdot \f \cdot \sigma\inv \tau_x^k=\f.
$$
We then see that $\tau_x^{-k} \sigma$ commutes
with $\f,$ which means that
$$
\widehat \s=\widehat{\tau_x^{-k} \sigma}
$$
is indeed induced by an element of $\Za(\f),$
as required.

Next, let $m$ be odd: $m=2k+1,$ where $k \in \Z.$
Arguing as before, we obtain that
$$
\rho \f \rho\inv = \tau_x \f,
$$
where $\rho = \tau_x^{-k} \s.$
As $\av \f$ and $\av \rho$ commute in the
group $\aut A,$
$$
\av \rho(\av x)=\pm \av x
$$
and $\av \rho$ stabilizes the fixed-point
subgroup $\str{\av \cY}$ of $\av \f.$
This implies that
$$
\av \rho = \av \rho_0,
$$
for an appropriate automorphism $\rho_0$
of the group $N$ for which we have
$$
\rho_0 x = x^{\pm 1}
$$
and $\rho_0 \str{\cY} =\str{\cY}.$ Clearly,
$\rho_0$ commutes with $\f$ and
$$
\rho=\alpha \rho_0
$$
for some $\alpha$ in $K=\ia N.$

Therefore
$$
\alpha \rho_0 \f \rho_0\inv \alpha\inv = \tau_x \f,
$$
or
\begin{equation}
\alpha \f \alpha\inv = \tau_x \f,
\end{equation}
Write $\alpha$ as
$$
\alpha = \alpha_1 \alpha_2,
$$
where $\alpha_1 \in K^+_\f$ and $\alpha_2 \in K^-_\f$
(please consult Lemma \ref{Exts:Basics} if necessary).
Then we deduce from (\theequation) that
$$
\tau_x = \alpha \cdot \f \alpha\inv \f =
\alpha_1 \alpha_2 \cdot \alpha_1\inv \alpha_2 = \alpha_2^2,
$$
thereby obtaining that $\tau_x$ is a square
in the group $K,$ and hence $x$ is a square
in the group $N$ modulo the subgroup $N',$
which is impossible.

Finally, since obviously $\widehat{\Za(\f)} \le \Zo(\widehat \f)$
and the above argument demonstrates that
$\Zo(\widehat \f) \le \widehat{\Za(\f)},$ we get
that
\begin{equation}
\Zo(\widehat \f) = \widehat{\Za(\f)},
\end{equation}
as claimed.

(ii) Taking into account that the map $\widehat{\phantom a}$
is a homomorphism, we derive from (\theequation) that
$$
\Zo(\widehat \f)'=
[\Zo(\widehat \f),\Zo(\widehat \f)] =
[\widehat{\Za(\f)}, \widehat{\Za(\f)}]=
\widehat{ [\Za(\f), \Za(\f)]}=\widehat{\Za(\f)'}.
$$

(iii) Let $\rho_1,\rho_2 \in \Za(\f)$ be elements
of the centralizer of $\f$ in the group
$\aut N.$ By part (ii) of Lemma \ref{Exts:Basics},
$$
\rho_1 = \tau_{u_1} \s_1 \text{ and } \rho_2 = \tau_{u_2} \s_2,
$$
where $u_1,u_2 \in \str{\cY},$
$$
\s_1(x)=x^\eps \text{ and } \s_2(x) =x^\eta,
$$
where $\eps, \eta =\pm 1,$ and both $\s_1,\s_2$
preserve the subgroup $\str{\cY}.$ Clearly,
$$
\widehat{[\rho_1,\rho_2]}=\widehat{[\s_1,\s_2]}.
$$
Furthermore,
\begin{align*}
[\s_1,\s_2](x) &= \s_1 \s_2 \s_1\inv \s_2\inv(x)
= x
\end{align*}
and
$$
[\s_1,\s_2](\str\cY) =\str\cY.
$$
Accordingly,
$$
\widehat{\Za(\f)'} \le
\widehat C,
$$
where
$$
C = \{\lambda \in \aut N : \lambda(x) =x \text{ and } \lambda(\str\cY)=\str\cY\} \le \Za(\f).
$$
On the other hand, since evidently
$C \cong \aut N$ and since the latter
group is perfect \cite[Prop. 2.4, Th. 3.1]{To_Smallness4Nilps},
every element of $C$ is a product of commutators
of elements of $\Za(\f),$
whence $\widehat{\Za(\f)'}=\widehat C.$

It remains to show that
$$
\widehat \lambda_1 = \widehat \lambda_2 \To \lambda_1=\lambda_2
$$
for all $\lambda_1,\lambda_2$ in $C.$ This is immediate,
because the equality
$$
\tau_z \lambda_1 = \lambda_2
$$
for some $z \in N$ leads to $z = x^k c$ for suitable
$k \in \Z$ and $c \in N'$---due to the fact
that $\lambda_1(x)=\lambda_2(x)=x.$ The number $k$
must be equal to zero, since otherwise
$$
\tau_{x^k} \lambda_1(\str\cY) =x^k \str\cY x^{-k} \ne \str\cY=\lambda_2(\str\cY).
$$

(iv) Recall that a subset $J$ of an infinite
set $I$ is called a {\it moiety} of $I$
if $|J|=|I \setminus J|.$

An automorphism $\s$ of the group
$N$ is called a {\it $\cB$-moietous},
where $\cB$ is the basis given
in the conditions,
if there is a moiety
$\cC$ of $\cB$ such that $\s$ fixes setwise
the subgroup $\str\cC$ generated by $\cC$
and fixes the set $\cB \setminus \cC$ pointwise.
Theorem 2.5 from \cite{To_Berg} can be equivalently
rephrased by saying that the group $\aut N$ is generated
by all $\cB$-moietous automorphisms. It follows
that the group $\aut N$ is then generated by the subgroup
$\Za(\f)'$ and $\pi.$ Indeed, write $\Pi$ for the
set of all $\cB$-permutational automorphisms
of $N,$ that is, automorphisms that fix $\cB$
setwise. Clearly,
$\Pi_{(x)} =\Pi \cap \Za(\f)',$
or, in other words, a $\cB$-permutational
automorphism fixes $x$ if and only if this
automorphism is in $\Za(\f)'.$ Further, since
$$
\pi \Pi_{(x)} \pi =\Pi_{(z)},
$$
the group $\str{\Za(\f)',\pi}$ contains
both subgroups $\Pi_{(x)}$ and $\Pi_{(z)}$
which generate the subgroup $\Pi$ (Lemma \ref{MN-gen-sym}).
We therefore see that the group $\str{\Za(\f)',\pi}$
contains all $\cB$-permutational automorphisms,
and so all $\cB$-moietous automorphisms, and hence
$$
\aut N=\str{\Za(\f)',\pi}.
$$
The result then follows, since $\widehat{\phantom a} : \aut N \to \out N$
is a surjective homomorphism.
\end{proof}

\section{Completeness}

Having an element $r \in \out N,$ we shall denote
the inner automorphism
$$
s \mapsto r s r\inv, \qquad (s \in \out N)
$$
of the group $\out N$ determined by $r$
by $T_r.$

\begin{Th} \label{MainTh}
The outer automorphism group of any infinitely
generated free nilpotent group of class two is complete.
\end{Th}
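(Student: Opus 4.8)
The plan is to fix an arbitrary $\Phi \in \aut{\out N}$ and to prove that $\Phi$ is inner; since $\out N$ is centerless by Corollary \ref{Centerless}, this yields completeness. The whole argument rests on definability: by Proposition \ref{Def_o_Exts} the set of extremal involutions is a definable subset of $\out N,$ so $\Phi$ permutes the extremal involutions, and it likewise preserves every auxiliary object recovered above (the subgroup $\widehat K,$ where $K=\ia N,$ the symmetries, the \At-extremal involutions, and the \At-basis modelling ensembles). Throughout, I will repeatedly replace $\Phi$ by $T_r \Phi$ for a suitable $r \in \out N$; since each such modification alters $\Phi$ only by an inner automorphism, it suffices to drive $\Phi$ to the identity.

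\emph{First reduction.} All extremal involutions lie in a single conjugacy class of $\out N$: if $\f_1,\f_2 \in \aut N$ invert the distinguished elements $x_1 \in \cB_1,$ $x_2 \in \cB_2$ of their bases and fix the remaining basis elements, then any $\psi \in \aut N$ carrying the pointed basis $(\cB_1,x_1)$ to $(\cB_2,x_2)$ conjugates $\f_1$ to $\f_2,$ and the same holds for their images in $\out N.$ Fix once and for all a reference extremal involution $f=\widehat\f$ with associated basis $\cB=\{x\}\cup\cY,$ where $\f x=x\inv$ and $\f y=y$ for $y \in \cY.$ Since $\Phi(f)$ is again extremal, it is conjugate to $f,$ and after composing with a suitable $T_r$ I may assume $\Phi(f)=f.$

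\emph{Second reduction.} Because centralizers are preserved by automorphisms and the derived subgroup is characteristic, the equality $\Phi(f)=f$ forces $\Phi$ to stabilize $\Zo(\widehat\f)$ and hence $\Zo(\widehat\f)'.$ By Lemma \ref{Ext:Z(f)} we have $\Zo(\widehat\f)' \cong \aut N,$ so $\Phi$ restricts to an automorphism of a copy of $\aut N.$ The key external input is that the automorphism group of an infinitely generated free nilpotent group of class two is itself complete \cite{To_2step, To_Aut(N)}; therefore this restriction is inner, i.e. it coincides with conjugation by some $w \in \Zo(\widehat\f)'.$ As $w$ centralizes $f,$ replacing $\Phi$ by $T_{w\inv}\Phi$ preserves $\Phi(f)=f$ and arranges, in addition, that $\Phi$ fixes every element of $\Zo(\widehat\f)'.$

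\emph{Final step and main obstacle.} By Lemma \ref{Ext:Z(f)}(iv) the group $\out N$ is generated by $\Zo(\widehat\f)'$ together with $\widehat\pi,$ where $\pi$ is the involution interchanging $x$ with a chosen $z \in \cY$ and fixing $\cY \setminus\{z\}$ pointwise. Since $\Phi$ now fixes $\Zo(\widehat\f)'$ pointwise, everything reduces to the single identity $\Phi(\widehat\pi)=\widehat\pi$; once this is established $\Phi$ fixes a generating set of $\out N$ and is thus the identity, whence the original $\Phi$ was inner. I expect this last point to be the main obstacle, since no further conjugation is available without disturbing $\Zo(\widehat\f)'.$ The strategy is to exploit that conjugation by $\widehat\pi$ carries $f$ to the extremal involution $\widehat{\pi\f\pi}$ and $\Zo(\widehat\f)'$ to $\Zo(\widehat{\pi\f\pi})',$ and that both $\widehat\pi$ and $\Phi(\widehat\pi)$ centralize the large common subgroup $E=\Zo(\widehat\f)' \cap \Zo(\widehat{\pi\f\pi})',$ which $\Phi$ fixes pointwise; it follows that $g=\widehat\pi\inv\Phi(\widehat\pi)$ centralizes $E.$ Computing the centralizer of $E$ in $\out N,$ via Lemma \ref{MN-gen-sym} and the identification of Lemma \ref{Ext:Z(f)}(iii), and combining this with $\Phi(\widehat\pi)^2=1$ and the preserved relation between $\widehat\pi$ and $f,$ should pin $g$ down to the identity, giving $\Phi(\widehat\pi)=\widehat\pi$ and completing the proof.
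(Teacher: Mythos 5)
Your first two reductions are correct and coincide exactly with the paper's: normalize $\Phi$ so that it fixes a chosen extremal involution $f=\widehat\f$ (using definability and conjugacy of extremal involutions), then use $\Zo(\widehat\f)'\cong\aut N$ and the completeness of $\aut N$ to arrange that $\Phi$ fixes $\Zo(\widehat\f)'$ pointwise. The gap is in the final step, and it is not merely a missing computation: the endpoint you aim for, $\Phi(\widehat\pi)=\widehat\pi$, is not forced by the constraints you have accumulated. Your remark that ``no further conjugation is available without disturbing $\Zo(\widehat\f)'$'' is false --- $T_f$ itself fixes $f$ and fixes $\Zo(\widehat\f)'$ pointwise (every element of $\Zo(\widehat\f)'$ commutes with $f$), so every condition you have imposed so far, and every relation you propose to exploit at the end (that $\Phi(\widehat\pi)$ is an involution centralizing $E=\Zo(\widehat\f)'\cap\Zo(\widehat{\pi\f\pi})'$ and conjugating $f$ to $\widehat{\pi\f\pi}$), is invariant under replacing $\Phi$ by $T_f\circ\Phi$. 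Concretely, $\widehat{\f\pi\f}=\widehat\pi\cdot\widehat{(\pi\f\pi)\f}$ satisfies all of these conditions but differs from $\widehat\pi$ (the automorphism $(\pi\f\pi)\f$ inverts $x$ and $z$ and is not inner), so your element $g$ cannot be pinned down to the identity; at best you can hope for $g\in\{1,\widehat{(\pi\f\pi)\f}\}$ modulo a contribution from $\widehat{\ia N}$. The paper handles exactly this ambiguity by a case split, composing with $T_f$ in the second case before concluding.

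Even granting that correction, two substantive pieces of your sketch remain unproved. First, the centralizer of $E$ in $\out N$ is never computed; this is delicate because centralizing in $\out N$ is weaker than in $\aut N$ (compare the work needed in Lemma \ref{Ext:Z(f)}(i)), and the centralizer visibly contains nontrivial elements such as $\widehat{(\pi\f\pi)\f}$ and, potentially, images of IA-automorphisms. Second, after the $T_f$ adjustment one still has $\Phi(\widehat\pi)=\widehat{\beta\pi}$ with $\beta\in\ia N$, and killing $\beta$ requires an argument: the paper does this by observing that $\Delta$ fixes the symmetry $\widehat\theta$ (since $\theta=\f\theta_0$ with $\widehat{\theta_0}\in\Zo(\widehat\f)'$), that $\pi$ commutes with $\theta$, and that the resulting relation forces $\beta^2=\tau_v$, whence $\beta$ is inner by Claim \ref{iaSqEqTauz}. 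It also needs the A-basis modelling ensembles $\{G_b\}$ and the conjugacy-preservation of induced automorphisms of $\aut A$ to establish in the first place that $\Phi(\widehat\pi)$ acts like $\pi^{\pm1}$ on the basis modulo $N'$. None of these ingredients appear in your sketch, so as written the proposal does not close.
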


\begin{proof}
As we have seen above,
the group
$\out N$ is centerless (Cor. \ref{Centerless}).
Consider an arbitrary automorphism $\Delta$ of the group
$\out N.$
We need to demonstrate that
$\Delta$ is an inner automorphism of the group $\out N,$
thus completing the proof.

We aim at multiplying $\Delta$ by a number of suitable
inner automorphisms of the group $\out N$ so that
the resulting automorphism is the trivial one.

Let $f$ be an extremal involution in the group
$\out N.$ Due to definability of the extremal
involutions in the group $\out N$ (Lemma \ref{Def_o_Exts}), the image
$\Delta(f)$ of $f$ is also an extremal involution.
Since the extremal involutions form a conjugacy
class in the group $\out N,$ there exists therefore
$r \in \out N$ such that
$$
(T_r \circ \Delta)(f) = f.
$$
Write $\Delta_1$ for the product $T_r \circ \Delta.$

Now as the automorphism $\Delta_1$ fixes
$f,$ it fixes setwise the centralizer $\Zo(f)$
and hence fixes setwise its commutator subgroup $\Zo(f)'$:
$$
\Delta_1(\, \Zo(f)'\,) = \Zo(f)'.
$$
Next, by Proposition \ref{Ext:Z(f)}, the group
$\Zo(f)'$ is isomorphic to the group $\aut N$
which is complete \cite[Th. 5.1]{To_2step}.
Accordingly, taking a suitable $s \in \Zo(f)',$
we see that the automorphism
$$
\Delta_2 = T_s \circ \Delta_1
$$
fixes pointwise all elements of the
group $\Zo(f)'$ and, obviously, fixes $f.$

Choose an extremal involution $\f \in \aut N$
which induces $f.$
Let $\cB = \{x\} \cup \cY$
be a basis of the group $N$ on which $\f$ acts
canonically, that is, $\f x=x\inv$ and
$\f(y)=y$ for all $y \in \cY.$

Consider then the symmetry $\theta$ which inverts
all elements of the basis $\cB.$ Write $\theta$ as
$$
\theta = \f \theta_0,
$$
where the automorphism $\theta_0$ of $N$ fixes
$x$ and inverts all elements of the set $\cY.$
By Lemma \ref{Ext:Z(f)}, $\widehat \theta_0$ is in
$\Zo(f)',$ and then
$$
\Delta_2( \widehat \theta)
=\Delta_2(\widehat \f \cdot \widehat\theta_0)
=\widehat \f \cdot \widehat \theta_0=\widehat\theta.
$$
Bearing in mind
that we intend to use part (iv)
of Lemma 3.7,
we need a $\cB$-permutational automorphism $\pi$ of
order two from the group $\aut N.$
Take an element $z \in \cB,$
and then choose $\pi$ to be the automorphism
of the group $N$ which interchanges $x$ and $z,$
and fixes $\cB \setminus \{x,z\}$ pointwise.
It is important to note that $\pi$ commutes with $\theta.$

Observe for future
use that given any
automorphism $\Upsilon$ of the group
$\out N,$ we have that
\begin{equation} \label{Ups_conj}
\av s \sim \overline{\Upsilon(s)}, \qquad (s \in \out N),
\end{equation}
where $\sim$ is the conjugacy
relation on the group $\aut A.$ This is due to the fact that the map
$$
\av s \mapsto \overline{\Upsilon(s)},
$$
where $s$ runs over the group $\out N,$
is a well-defined automorphism of the
group $\aut A$ (in view of definability
of the group $K$) and the fact that
all automorphisms of the group
$\aut A$ are inner \cite[Th. 2.1]{To_Aut(A)}.

Consider the \At-basis modelling ensemble
$\{G_b : b \in \cB\},$ where
$$
G_b =\widehat{\Gamma_{(b)}K}, \qquad (b \in \cB).
$$
Now for every $b \in \cB,$ the group $G_b$ is definable
in the group $\out N$ with the parameter $\widehat \f_b,$ where $\f_b$ is the extremal
involution which inverts $b$ and fixes all other
elements of $\cB;$ note that $f=\widehat \f=\widehat \f_x.$ As we have that
$$
\widehat \f_y \in \Zo(f)', \text{ and hence }
\Delta_2( \widehat \f_y )=\widehat \f_y
$$
for all $y \in \cY,$ then
\begin{equation}
\Delta_2( G_b ) = G_b
\end{equation}
for all $b \in \cB.$

The involution $\widehat \pi \in \out N$
induced by $\pi$ admits the following
description:
$$
\widehat \pi G_x \widehat \pi = G_z \text{ and }
\widehat \pi \in G_b
$$
for all $b \in \cB \setminus \{x,z\}.$ By (\theequation),
\begin{equation}
\Delta_2(\widehat \pi) G_x \Delta_2(\widehat \pi) = G_z \text{ and }
\Delta_2(\widehat \pi) \in G_b
\end{equation}
for all $b \in \cB \setminus \{x,z\}.$ Let
$$
\Delta_2( \widehat \pi ) = \widehat{\pi_1},
$$
where $\pi_1 \in \aut N.$ By (\theequation), either
$$
\pi_1 x = z c_z, \text{ or } \pi_1 x = z\inv c_z,
$$
where $c_z \in N'.$ Due to $\widehat \pi_1$
being an involution in the group $\out N,$ then either
$$
\text{(a): }
\begin{cases}
\pi_1 x = z c_z, \\
\pi_1 z = x c_x,
\end{cases}
\text{ or }
\text{(b): }
\begin{cases}
\pi_1 x = z\inv c_z, \\
\pi_1 z = x\inv c_x,
\end{cases}
$$
where $c_x \in N'.$ Furthermore,
by (\theequation),
$$
\pi_1 b = b c_b,
$$
where $c_b \in N',$ for all $b \in \cB \setminus \{x,z\}$
(for if there were any $b \in \cB \setminus \{x,z\}$
which $\pi_1$ sends to $b\inv$ modulo
$N',$ then $\av \pi_1$ would not be
conjugate to $\av \pi,$ contradicting
\eqref{Ups_conj}).

Accordingly, if (a) holds, then
$$
\pi_1 = \beta \pi,
$$
where $\beta \in K.$ Suppose that (b) holds. If so,
$$
\pi_1 z = x\inv c_x \To \pi_1 \f (z) = \f(x) c_z,
$$
and then
$$
(\f \pi_1 \f)(z) = x \f(c_z).
$$
Again, since $\widehat \pi_1$ is an involution,
we obtain, arguing as above, that
$$
\f \pi_1 \f = \beta \pi
$$
for some $\beta \in K.$

So let $\Delta_3 = \Delta_2$ if (a) holds,
and $\Delta_3 = T_f \circ \Delta_2$ if (b) holds.
Now, since
$\Delta_2,$ fixes $\widehat \theta$ and fixes $\Zo(f)'$
pointwise, it is clear that $\Delta_3$ does, too.
We claim that $\Delta_3$ fixes
$\widehat \pi$ as well---due to the fact that
$\pi$ commutes with $\theta.$
Indeed,
$$
\pi \theta \pi = \pi \theta \pi\inv= \theta
$$
implies that
$$
\Delta_3(\widehat \pi) \cdot \widehat \theta \cdot \Delta_3(\widehat \pi)\inv = \widehat\theta.
$$
As it has been discovered above, $\Delta_3(\widehat \pi)$
is induced by an automorphism $\beta \pi$ for some
$\beta \in K.$ Thus
$$
\beta \pi \theta \pi \beta\inv = \tau_v \theta
$$
for a suitable $v \in N.$ We then get that
$$
\beta^2 = \tau_v,
$$
whence $\beta=\tau_u,$ where $u \in N,$ is an inner automorphism
of the group $N.$ Consequently,
$$
\Delta_3( \widehat \pi ) = \widehat{\tau_u \pi}=\widehat \pi.
$$
We therefore conclude, by part (iv)
of Lemma \ref{Ext:Z(f)}, that $\Delta_3$ fixes
all elements of the group $\out N,$ and so it
is the trivial automorphism. By the construction,
the
original
automorphism $\Delta$ we have started with
is therefore an inner automorphism of the
group $\out N,$ as claimed.
\end{proof}

\end{document}